\tikzset{every picture/.style={baseline=-.65ex}}
\tikzset{ext/.style={circle, draw,inner sep=1pt},int/.style={circle,draw,fill,inner sep=1pt},nil/.style={inner sep=1pt}}
\tikzset{exte/.style={circle, draw,inner sep=3pt},inte/.style={circle,draw,fill,inner sep=3pt}}
\tikzset{diagram/.style={matrix of math nodes, row sep=3em, column sep=2.5em, text height=1.5ex, text depth=0.25ex}}
\tikzset{diagram2/.style={matrix of math nodes, row sep=0.5em, column sep=0.5em, text height=1.5ex, text depth=0.25ex}}
\tikzset{every loop/.style={draw}}
\newcommand{\tadpole}{
\begin{tikzpicture}[baseline=-.65ex]
\node[int] (v) at (0,0) {};
\draw (v) edge[loop] (v);
\end{tikzpicture}
}
\theoremstyle{plain}
\newtheorem{conj}[subsection]{Conjecture}
\newtheorem{prop}[subsection]{Proposition}
\newtheorem{lemma}[subsection]{Lemma}
\newtheorem{cor}[subsection]{Corollary}
\theoremstyle{definition}
\newtheorem{rem}[subsection]{Remark}
\DeclareMathOperator{\vspan}{span}
\newcommand{\G}{\mathsf{G}}
\newcommand{\GC}{\mathsf{GC}}
\DeclareMathOperator{\gr}{gr}
\DeclareMathOperator{\rk}{rank}
\DeclareMathOperator{\vdim}{dim}
\newcommand{\lp}{\text{-loop}}
\newcommand{\grt}{\mathfrak{grt}}
\newcommand{\F}{\mathbb{F}}
\newcommand{\Q}{\mathbb{Q}}
\title{The 11-loop graph cohomology}
\author{Thomas Willwacher}
\address{Department of Mathematics, ETH Zurich, Rämistrasse 101, 8092 Zurich, Switzerland}
\thanks{This work has been partially supported by the NCCR Swissmap, funded by the Swiss National Science Foundation}
\date{}
\begin{document}

\maketitle

\begin{abstract}
    We compute the Kontsevich graph cohomology in loop order 11, and for some degrees in higher loop order. We apply our results to discuss a conjecture of F. Brown, providing a counterexample to a strong version of the conjecture.
\end{abstract}

\section{Introduction}
The Kontsevich (commutative) graph complexes $\G_n$ are differential graded vector spaces spanned by linear combinations of at least trivalent connected graphs. More precise, for $n$ a number, $\G_n$ is generated as a graded vector space by pairs $(\Gamma,o)$ with $\Gamma$ a connected graph with $\geq 3$-valent vertices and without self-edges (tadpoles) and $o$ an orientation datum, which is either an ordering of the set of edges if $n$ is even or an ordering of the set of vertices and half-edges if $n$ is odd. One imposes two relations among those generators:
\begin{enumerate}
\item (Isomorphism) For $\phi:\Gamma\to \Gamma'$ an isomorphism of graphs we set $(\Gamma,o) = (\Gamma',\phi_*o)$ with $\phi_*o$ the orientation datum on $\Gamma'$ naturally induced by $\phi$ from $o$.
\item (Orientation change) For $\sigma$ a permutation of the set of edges (resp. half-edges and vertices) and $\pi o$ the appropriately permuted orientation datum we set 
$(\Gamma,o)=\mathrm{sgn}(\sigma) (\Gamma, \sigma o)$.
\end{enumerate}
\[
    \begin{tikzpicture}[baseline=-.65ex, scale=.5]
\node[int] (c) at (0,0){};
\node[int] (v1) at (0:1) {};
\node[int] (v2) at (72:1) {};
\node[int] (v3) at (144:1) {};
\node[int] (v4) at (216:1) {};
\node[int] (v5) at (-72:1) {};
\draw (v1) edge (v2) edge (v5) (v3) edge (v2) edge (v4) (v4) edge (v5)
      (c) edge (v1) edge (v2) edge (v3) edge (v4) (c) edge (v5);
\end{tikzpicture}
+
3\,
    \begin{tikzpicture}[baseline=-.65ex, scale=.5]
\node[int] (c) at (0.7,0){};
\node[int] (v1) at (0,-1) {};
\node[int] (v2) at (0,1) {};
\node[int] (v3) at (2.1,-1) {};
\node[int] (v4) at (2.1,1) {};
\node[int] (d) at (1.4,0) {};
\draw (v1) edge (v2) edge (v3)  edge (d) edge (c) (v2) edge (v4) edge (c) (v4) edge (d) edge (v3) (v3) edge (d) (c) edge (d);
\end{tikzpicture} \, \in \G_n,
  \]
We give $G_n$ the structure of a graded vector space by declaring $(\Gamma,o)$ to be of cohomological degree 
$$
|(\Gamma,o)| = (n-1)\#\text{edges}- n(\#\text{vertices}-1).
$$
We define a differential on $\G_n$ as
\[
d\Gamma =\sum_e (\Gamma/e, o_e)
\]
where the sum is over edges, $\Gamma/e$ is obtained by contracting the edge $e$ and $o_e$ is a natural orientation datum associated to $o$ and $e$, see \cite[Section 2]{BrunWillwacher}. 
\[
\begin{tikzpicture}
  \node[int] (v) at (0,0) {};
  \node[int] (w) at (1,0) {};
  \draw (v) edge node[above] {$\scriptstyle e$} (w) edge +(-.5,.5) edge +(-.5,-.5) edge +(-.5,0)
  (w) edge +(.5,.5) edge +(.5,-.5) edge +(.5,0);
\end{tikzpicture}
\xrightarrow{\text{contract}}
\begin{tikzpicture}
  \node[int] (v) at (0,0) {};
  \draw (v) edge +(-.5,.5) edge +(-.5,-.5) edge +(-.5,0)
  edge +(.5,.5) edge +(.5,-.5) edge +(.5,0);
\end{tikzpicture}
\]
The differential does not alter the loop order of graphs, and hence we have a direct sum decomposition of dg vector spaces according to loop order
\[
\G_n = \bigoplus_{g\geq 3} \G_n^{g\lp}.
\]
Each complex $\G_n^{g\lp}$ is of finite, though rapidly growing dimension. The dependence on $n$ in the definition of the graph complex is somewhat mild in that 
\[
\G_n^{g+2\lp} \cong \G_n^{g\lp}[2g].
\]
Hence all $\G_n$ for even $n$ are isomorphic up to loop-order-dependent regrading, as well as all $\G_n$ for odd $n$. However, no substantial link is known to the author between the cohomologies of $\G_2$ and $\G_3$ for example, despite both having closely related Euler characteristics \cite{BorinskyZagier}. 

The graph complex $\G_n$ is in fact a dg Lie coalgebra, and hence it is natural to work with the dual dg Lie algebra 
\[
\GC_n := \G_n^*
\]
as well. Elements of $\GC_n$ can also be considered as series of graphs with an orientation datum, with the same equivalence relations as above, but with the dual differential given combinatorially by splitting vertices instead of contracting edges. 
\[
\begin{tikzpicture}
  \node[int] (v) at (0,0) {};
  \draw (v) edge +(-.5,.5) edge +(-.5,-.5) edge +(-.5,0)
  edge +(.5,.5) edge +(.5,-.5) edge +(.5,0);
\end{tikzpicture}
\xrightarrow{\text{split}}
\sum \,
\begin{tikzpicture}
  \node[int] (v) at (0,0) {};
  \node[int] (w) at (1,0) {};
  \draw (v) edge (w) edge +(-.5,.5) edge +(-.5,-.5) edge +(-.5,0)
  (w) edge +(.5,.5) edge +(.5,-.5) edge +(.5,0);
\end{tikzpicture}
\]
Also, in our conventions dualization negates degrees, and a generator $(\Gamma,o)$ of the dual graph complex $\GC_n$ is hence considered of cohomological degree 
$$
|(\Gamma,o)| = -(n-1)\#\text{edges}+ n(\#\text{vertices}-1).
$$

Since its introduction by Kontsevich in the 1990s the graph homology $H(\GC_n)\cong H(\G_n)^*$ has seen numerous applications in algebra, topology and geometry \cite{CGP1,Willwachergrt,FTW}. However, there is still no satisfying understanding of its structure at large. Hence numerical computations are of considerable interest. By earlier works \cite{BNMK, BrunWillwacher} we know $H(\GC_n^{g\lp})$ for $g\leq 10$. The main contribution of this work is the computation of $H(\GC_n^{11\lp})$, and several entries beyond. Our current knowledge of $H(\GC_n^{g\lp})$, including the results of the present work, is summarized in Tables \ref{tab:GC2} and \ref{tab:GC3}.

\begin{table}[h]
  \centering
    \begin{tabular}{|c|c|c|c|c|c|c|c|c|c|c|c|c|c|c|c|c|}
    \hline
    \rowcolor{gray!50} 
     k,g & 3 & 4 & 5 & 6 & 7 & 8 & 9 & 10 & 11 & 12 & 13 & 14 & 15 & 16 & 17 & 18 \\ \hline
     11 & - & - & - & - & - & - & - & - & - & - & - & $\leq 1$ & ? & & &  \\ \hline
     10 & - & - & - & - & - & - & - & - & - & - & $\leq 1$ & ? &  & & &  \\ \hline
     9 & - & - & - & - & - & - & - & - & - & 0 & ? &  &  &  & &  \\ \hline
    8 & - & - & - & - & - & - & - & - & 0 & ? &  &  &  &  & &  \\ \hline
    7 & - & - & - & - & - & - & - & 1 & 0 & ? &  &  &  &  & &  \\ \hline
    6 & - & - & - & - & - & - & 0 & 0 & 1 & ? &  &  &  &  & &  \\ \hline
    5 & - & - & - & - & - & 0 & 0 & 0 & 0 & ? &  &  &  &  & &  \\ \hline
    4 & - & - & - & - & 0 & 0 & 0 & 0 & 0 & ? &  &  &  &  & &  \\ \hline
    3 & - & - & - & 1 & 0 & 1 & 1 & 2 & 2 & 2 & 4 & ? &  &   & & \\ \hline
    2 & - & - & 0 & 0 & 0 & 0 & 0 & 0 & 0 & 0 & 0 & ? &  &  & &  \\ \hline
    1 & - & 0 & 0 & 0 & 0 & 0 & 0 & 0 & 0 & 0 & 0 & ? &  &  & &  \\ \hline
    0 & 1 & 0 & 1 & 0 & 1 & 1 & 1 & 1 & 2 & 2 & 3 & 3 & 4 & 5 & 7 &$\dots$ \\ \hline
  \end{tabular}
  
  \medskip
  
  \begin{tabular}{|c|c|c|c|c|c|c|c|c|c|c|c|c|c|c|c|c|}
    \hline
    \rowcolor{gray!50} 
     k,g & $\dots$ & 18 & 19 & 20 & 21 & 22 & 23 & 24 & 25 & 26 & 27 & 28 & 29 & 30 \\ \hline
    0 & $\dots$ & 8 & 11 & 13 & 17 & 21 & 28 & 34 & 45 & 56 & 73 & 92 & 120 & ?
     \\ \hline
  \end{tabular}
  \caption{\label{tab:main res even n}The dimensions $dim H^k(\GC_2^{g-loop})$ of the graph cohomology. Note that $H^0(\GC_2^{g-loop}) =\gr_g\grt_1$ is the weight $g$ part of the Grothendieck-Teichmüller Lie algebra, whose dimension is known up to $g=29$, see \cite{NaefWillwacherKV, datamine}. The entries to the right of the question marks, including the question marks, are unknown. The values for $k<0$ are zero by known degree bounds. The same holds for $k>g-3$, since $\GC_2^{g-loop}$ is zero beyond that degree.}
  \label{tab:GC2}
\end{table}

\begin{table}[h]
  \centering
  \begin{tabular}{|c|c|c|c|c|c|c|c|c|c|c|c|c|c|c|c|c|}
    \hline
    \rowcolor{gray!50} 
     k,g & 2 & 3 & 4 & 5 & 6 & 7 & 8 & 9 & 10 & 11 & 12 & 13 & 14 & 15 &16 & 17 \\ \hline
    -3 & 1 & 1 & 1 & 2 & 2 & 3 & 4 & 5 & 6 & 8 & 9 & 11 & 13 & 
    ?
    & &\\ \hline
    -4 & - & - & 0 & 0 & 0 & 0 & 0 & 0 & 0 & 0 & ? & ? &  & & & \\ \hline
    -5 & - & - & - & 0 & 0 & 0 & 0 & 0 & 0 & 0 & ? & ? &  &  & & \\ \hline
    -6 & - & - & - & - & 1 & 1 & 2 & 3 & 5 & $7^\dagger$ & ? & ? &  &  & & \\ \hline
    -7 & - & - & - & - & - & 0 & 0 & 0 & 0 & $1^\dagger$ & ? &  &  &  & & \\ \hline
    -8 & - & - & - & - & - & - & 0 & 0 & 0 & 0 & ? &  &  &  & & \\ \hline
    -9 & - & - & - & - & - & - & - & 0 & 0 & 1 & ? &  &  &  & & \\ \hline
   -10 & - & - & - & - & - & - & - & - & 0 & 0 & ? &  &  &  & & \\ \hline
   -11 & - & - & - & - & - & - & - & - & - & 0 & ? &  &  &  & & \\ \hline
   -12 & - & - & - & - & - & - & - & - & - & - & $0^*$ & ? &  &  & & \\ \hline
   -13 & - & - & - & - & - & - & - & - & - & - & - & $0^*$ & ?  &  & &\\ \hline
   -14 & - & - & - & - & - & - & - & - & - & - & - &  - & $0^*$ & ?& &\\ \hline
   -15 & - & - & - & - & - & - & - & - & - & - & - &  - &  - & $0^*$ & ?&\\ \hline
   -16 & - & - & - & - & - & - & - & - & - & - & - &  - & - &- & $0^*$ & ?\\ \hline
   -16 & - & - & - & - & - & - & - & - & - & - & - &  - & - &- &- & $0^*$ \\ \hline
  \end{tabular}
  \caption{\label{tab:main res odd n}The dimensions $dim H^k(\GC_3^{g-loop})$ of the odd graph cohomology. 
  For the entries of the first line some lower bounds are known for all $g$, see \cite{Vogel}.
  The entries $(-)^\dagger$ for $g=11$, $k=-6,-7$ are not guaranteed to be exact $\Q$-dimensions, see the discussion in the text -- they are at least upper bounds, though, and if not tight the numbers $7,1$ would necessarily by $6,0$. 
  The entries $0^*$ are computed in the upcoming paper \cite{NaefWillwacherOKV}. They are not part of the present work, but nevertheless listed here for convenience of the reader.}
  \label{tab:GC3}
\end{table}

\section{Discussion of the entries of tables \ref{tab:GC2} and \ref{tab:GC3}}
\label{sec:cohom discussion}
The main tables \ref{tab:GC2} and \ref{tab:GC3} subsume data from multiple sources.
We shall discuss here the individual entries and where they come from, and give proofs if necessary. First, we note that by prior works \cite{BrunWillwacher, BNMK} the entries for $g\leq 10$ have been computed numerically.
The main contribution of the present work is the numerical computation of the entries for $g=11$.
To be precise, our numerical computations covers the cases of $g=11$, all $k$ for $n=3$, and $g=11$, $k\geq 5$ for $n=2$. The other entries in $g=11$ can be inferred theoretically as we shall discuss shortly.

First, we have the well-known vanishing statements that $H^k(\GC_2)=0$ for $k<0$ or $k>g-3$, and $H^k(\GC_3)=0$ for $k>-3$ or $k<-g$ (if $k\geq 3$) \cite{Willwachergrt}.
It is known by an earlier work of the author that $H^0(\GC_2)\cong \grt_1$ is identified with the Grothendieck-Teichmüller Lie algebra, which by work of Brown \cite{Brown} receives an injective map from a free Lie algebra in symbols $\sigma_3$, $\sigma_5$, $\dots$ (with $\sigma_{2k+1}$ of loop order $2k+1$). The Deligne-Drinfeld conjecture states that this injection is an isomorphism. This conjecture has been verified numerically up to loop order 29 \cite{NaefWillwacherKV}, hence the dimension of the Grothendieck-Teichmüller Lie algebra is known as far, and displayed in \ref{tab:GC2}. There is an odd analog of the Grothendieck-Teichmüller Lie algebra studied in the upcoming work \cite{NaefWillwacherOKV}, and as a consequence one can also compute the bottom cohomology $H^{-g}(\GC_3^{g\lp})$ for $g\leq 17$, the result being mostly zero.

Next consider the top cohomology $H^{-3}(\GC_3)$. This is a version of the space of Vassiliev invariants in knot theory, and has been studied extensively in the past \cite{Vogel, KneisslerNumber, BroadhurstV}. It had been known up to loop order 12, with lower bounds for all loop orders, and a conjectured dimension formula. Our contribution here is to extend the known range to $g\leq 14$, by computing upper bounds on the cohomology similarly to \cite{KneisslerNumber}, that happen to agree with the known lower bounds \cite[Table 1]{BroadhurstV}, \cite{Vogel}. The details on the top degree computation can be found in section \ref{sec:top degree} below.
The computation of the upper bounds for the top degree cohomology also applies mutatis mutandis to $\GC_2$, so that we similarly obtain the top degree entries in $g=12,13,14$ in Table \ref{tab:GC2}.
Unfortunately, in the cases $g=13,14$ no matching lower bound is known, though we conjecture that our upper bound is tight.

Finally turn to the remaining entries in Table \ref{tab:GC2} in $g=11,12,13$ and $k=1,2,3,4$.
In order to obtain these entries we use the spectral sequence introduced in \cite{KWZ} and further studied in \cite{WillwacherTriconnected}, see also Section \ref{sec:reps} below.
Precisely, there is a spectral sequence 
\[
E^1=H(\GC_0) \Rightarrow \bigoplus_k \Q \sigma_{2k+1}
\]
that starts from the cohomology $H^\bullet(\GC_0)$ (with $H^k(\GC_0^{g\lp})\cong H^{k-2g}(\GC_2^{g\lp})$). The upshot is that due to this spectral sequence, classes in $H(\GC_2)$ must cancel in pairs, with the exception of one class in degree 0 and each loop order $3,5,7,\dots$. A class in loop order $g$ and degree $k$ can cancel classes in loop order and degree $(g',k')=(g+i,k-2i+1)$ for $i=1,2,\dots$. Furthermore, as shown in \cite[Corollary 4.4]{WillwacherTriconnected}, no cancellation is possible for $i=1,3,5$, since the differential vanishes on the respective pages of the spectral sequence.

Now consider the entry $(g,k)=(11,1)$ of Table \ref{tab:GC2}. Potential nonzero classes in this loop order and degree combination cannot cancel other classes in the spectral sequence since the cohomology vanishes in negative degrees. Hence if the entry was nonzero, any nontrivial class would need to be killed in the spectral sequence, and then necessarily by some other class in $(g',k')=(11-i, 2i)$ for $i=2,4,6,7,\dots$. But there is no cohomology in those positions so that we need to have $H^1(\GC_2^{11\lp})=0$.
The same argument works to show that 
\[
H^2(\GC_2^{11\lp})=H^1(\GC_2^{12\lp})= H^2(\GC_2^{12\lp})=H^1(\GC_2^{13\lp})=H^2(\GC_2^{13\lp})=0.
\]
Next consider the entry $(g,k)=(11,3)$. Again no class in this spot can be killed in the spectral sequence. since there is no cohomology in the single possible position $(9,6)$.
Hence all classes in $(g,k)=(11,3)$ must kill other classes, and these other classes must necessarily be in the location $(12,0)$. But $H^0(\GC_2^{12\lp})=\vspan\{[\sigma_3,\sigma_9], [\sigma_5,\sigma_7]\}$ is 2-dimensional. Furthermore, the two generating classes can only be canceled in the spectral sequence from the $(11,3)$-position.
Hence we have shown that $H^3(\GC_2^{11\lp})\cong \Q^2$ is 2-dimensional.
Finally, we know the Euler characteristic \cite{WillwacherZivkovic, BorinskyZagier}, which is $1$ for $\GC_2^{11\lp}$. Hence it suffices to compute numerically the cohomology $H^k(\GC_2^{11\lp})$ for $k=5,6,7,8$ (see Table \ref{tab:comp res} for the numerical results), and we can then infer the case $k=4$, which computes to 0 as shown in Table \ref{tab:GC2}.

Next consider the entry $(g,k)=(13,3)$. One class (i.e., a one dimensional subspace) can be killed in the spectral sequence by the class at position $(11,6)$.
The class at position $(11,6)$ in fact has no other classes it could kill (the next allowed position $(15, -1)$ is already in the vanishing range), and there are no classes that could kill it in the spectral sequence. Hence we deduce that the $(11,6)$-class kills one class in the $(13,3)$-position on the $E^2$-page of the spectral sequence.
Furthermore, classes in the $(13,3)$ position can kill (only) classes in the $(15,0)$-position. But $H^0(\GC_2^{15\lp})=\vspan\{\sigma_{15},[\sigma_3,[\sigma_3,\sigma_9]],[\sigma_3,[\sigma_5,\sigma_7]], [\sigma_5,[\sigma_3,\sigma_7]]  \}$ is 4-dimensional with one class $\sigma_{15}$ surviving to the $\infty$-page of the spectral sequence.
We hence retain 3 classes to be killed. But the only allowed position they can be killed from is $(13,3)$, as $H^7(\GC_2^{11\lp})=0$. Alternatively, this also follows without knowing $H^7(\GC_2^{11\lp})$ from the discussion in \cite[Section 4]{WillwacherTriconnected} and the fact that all classes to be killed lie in the ideal of $\grt_1$ generated by $\sigma_3$.
In any case we conclude that $H^3(\GC_2^{13\lp})\cong \Q^4$.

Finally consider the entry $(g,k)=(12,3)$. No class here can be killed in the spectral sequence by similar degree reasons as above, so all classes need to kill other classes, and those other classes need to live in $(g',k')=(14,0)$. We know that $H^0(\GC_2^{14\lp})\cong \vspan\{[\sigma_3,\sigma_{11}], [\sigma_5,\sigma_9], [\sigma_3,[\sigma_3,[\sigma_3,\sigma_5]]]\}\cong \Q^3$. All three classes need to be killed in the spectral sequence, by classes in position $(12,3)$ or $(10,7)$.
At this point we need one new input: By Corollary \ref{cor:X10 survives} to be shown below we know that the class at $(10,7)$ kills one class at $(14,0)$. This is not clear from the table alone, since it could potentially also kill at the yet unknown entry $(12,4)$.
However, with the additional information we readily conclude that $H^3(\GC_2^{12\lp})\cong \Q^2$.

\section{The top degree cohomology}
\label{sec:top degree}
We next consider the top degree cohomology 
\[
H^{top}(\GC_n^{g\lp}) := H^{(3-n)g -3}(\GC_n^{g\lp}).
\]
Concretely, the top degree elements of $\GC_n^{g\lp}$ are linear combinations of trivalent graphs of loop order $g$ with $3g-3$ edges and $2g-g$ vertices.
They are automatically cocycles. The coboundaries are obtained by applying the differential to graphs with exactly one 4-valent vertex and all other vertices trivalent.
This produces the IHX relations
\begin{equation}\label{equ:IHX}
\begin{tikzpicture}
    \node[int] (v) at (0,.3) {};
    \node[int] (w) at (0,-.3) {};
    \draw (v) edge (w) edge +(-.5,.2) edge +(.5,.2) 
    (w) edge +(-.5,-.2) edge +(.5,-.2) ;
\end{tikzpicture}
\ + \
\begin{tikzpicture}
    \node[int] (v) at (.3,0) {};
    \node[int] (w) at (-.3,0) {};
    \draw (v) edge (w) edge +(0.2,-.5) edge +(0.2,.5) 
    (w) edge +(-0.2,-.5) edge +(-0.2,.5) ;
\end{tikzpicture}
\ + \
\begin{tikzpicture}
    \node[int] (v) at (.2,-.1) {};
    \node[int] (w) at (-.2,-.1) {};
    \draw (v) edge (w) edge (.5,-.5) edge (-.5,.5) 
    (w) edge (-.5,-.5) edge (.5,.5) ;
\end{tikzpicture}
=0.
\end{equation}
For odd $n$, specifically $n=3$, the top degree cohomology $H^{-3}(\GC_3)$ is fairly well studied. Many nontrivial classes are produced by Chern-Simons theory, and we hence obtain lower bounds on the cohomology, see \cite{Vogel}. 
Furthermore, for not too large $g$, upper bounds have been produced by Kneissler \cite{KneisslerNumber}.
To recall Kneissler's results, let us first introduce, for any permutation $\pi\in S_N$, the barrel graph (drawn for $N=5$)
\begin{equation}\label{equ:Bpi def}
B_\pi :=
\begin{tikzpicture}
    \node[int] (v1) at (0,.7) {};
    \node[int] (v2) at (.5,.7) {};
    \node[int] (v3) at (1,.7) {};
    \node[int] (v4) at (1.5,.7) {};
    \node[int] (v5) at (2,.7) {};
    \node[int] (w1) at (0,-.7) {};
    \node[int] (w2) at (.5,-.7) {};
    \node[int] (w3) at (1,-.7) {};
    \node[int] (w4) at (1.5,-.7) {};
    \node[int] (w5) at (2,-.7) {};
    \draw (v1) edge[bend left] (v5) (w1) edge[bend right] (w5)
    (v2) edge (v1) edge (v2) (v3) edge (v2) edge (v4) (v4) edge (v5)
    (w2) edge (w1) edge (v2) edge (w2) (w3) edge (v3) edge (w2) edge (w4) (w4) edge (v4) edge (w5) (w1) edge (v1) (w5) edge (v5);
    \draw[dashed, fill=white] (-.2,-.2) rectangle (2.2,.2);
    \node at (1,0) {$\pi$};
\end{tikzpicture}.
\end{equation}
The graph $B_\pi$ has $2N$ vertices, $N$ on the upper rim and $N$ on the lower rim, and the upper $N$ vertices are connected to the lower $N$ vertices according to the permutation $\pi$. There are $3N$ edges and the loop order is hence $g=N+1$.

\begin{prop}[Kneissler \cite{KneisslerNumber}]
\label{prop:KneisslerOrig}
Any top degree element of $\GC_3^{g\lp}$ is cohomologous to a linear combination of barrel diagrams $B_\pi$, for $\pi$ ranging over elements of $S_{g-1}$.
\end{prop}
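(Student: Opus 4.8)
The plan is to show that the top-degree subspace of $\GC_3^{g\lp}$, modulo IHX relations, is spanned by the barrel graphs $B_\pi$. Recall that a top-degree generator is a trivalent graph $\Gamma$ with $g$ loops, $3g-3$ edges and $2g-2$ vertices, and that two such generators are identified in cohomology whenever they differ by an IHX relation \eqref{equ:IHX}; the IHX relation is precisely the relation allowing us to replace any local ``H''-configuration of two adjacent edges by the sum of the two other routings. The strategy is to fix, inside $\Gamma$, a convenient spanning subgraph and then use IHX moves to push every edge not in that subgraph into the ``barrel'' shape.

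First I would choose a Hamiltonian-type structure to anchor the reduction. Concretely, among the $3g-3$ edges, single out a subset forming two ``rims'': one cycle through roughly half the vertices and one through the other half, together with one ``rung'' between the two rims. The remaining $N = g-1$ edges are then a perfect matching between the upper $N$ vertices and the lower $N$ vertices (after deleting the two rim cycles and the special rung, each vertex has exactly one leftover half-edge, since the graph is trivalent). A barrel graph $B_\pi$ is exactly a graph of this form in which the upper rim is the path $v_1 v_2 \cdots v_N$ closed up, the lower rim is the path $w_1 \cdots w_N$ closed up, and the matching is encoded by $\pi$. So the content of the proposition is that, up to IHX, an arbitrary trivalent top-degree graph can be brought to this normal form.

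The key steps, in order, would be: (1) Use IHX moves along the edges of $\Gamma$ to straighten out a long cycle: show that by repeatedly replacing an ``H'' by its two siblings one can increase the length of the longest embedded cycle until it passes through ``most'' vertices — this is the combinatorial heart, and is where I expect the real work to lie. (2) Having produced a large cycle $C$, split it (again via IHX moves localized at the chords of $C$) into the two rims plus a single connecting rung; here one uses that $\Gamma$ is trivalent so each vertex of $C$ has exactly one chord-endpoint. (3) Finally, observe that once the two rims and the rung are in place, the remaining $g-1$ edges automatically form a matching between the two rims, and reading off which upper vertex is matched to which lower vertex defines the permutation $\pi \in S_{g-1}$, so $\Gamma$ is cohomologous to $\pm B_\pi$. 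Throughout, one must check that each IHX move is applied only to genuinely trivalent graphs (so that it is a valid coboundary relation in the top-degree complex) and keep track of orientation signs, but those are bookkeeping.

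The main obstacle is step (1): there is no a priori reason an arbitrary trivalent graph contains a long cycle, and the IHX relation only lets us reroute edges locally, so one needs a careful inductive argument — an ``edge-straightening'' or surgery induction on, say, the number of edges outside the current candidate rims, showing that whenever the configuration is not yet a barrel there is an IHX move strictly decreasing this quantity (or some refined lexicographic measure). This is essentially Kneissler's argument, and I would follow his bookkeeping: set up a well-ordering on trivalent top-degree graphs, and for a non-barrel graph exhibit a ``bad'' local configuration to which IHX applies and which, after the move, produces graphs all strictly smaller in the ordering, so that by induction each is cohomologous to a combination of barrels. The delicate point is ensuring the chosen ordering actually decreases under the move in every case, including the degenerate configurations (short rims, multiple edges between the same pair of vertices, etc.), and this case analysis is what makes the proof nontrivial.
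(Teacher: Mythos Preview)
Your proposal identifies the correct target --- reduce any trivalent graph to barrel graphs via IHX --- but the route you outline is not the one the paper (following Kneissler) takes, and your version has a real gap.

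The paper's argument does \emph{not} try to grow a long or Hamiltonian cycle. Instead one fixes an \emph{arbitrary} cycle $S$ in $\Gamma$ (possibly short) and studies the complement $\nu=\Gamma\setminus S$. One then runs a case analysis on $\nu$: first use IHX at the base cycle to move all internal vertices of $\nu$ onto $S$, so that $\nu$ becomes a disjoint union of trees attached along $S$; then, inside each tree, use IHX to slide all vertices onto the unique path between its two extreme attachment points, producing ``circle diagram'' pieces; then use IHX at the junctions between consecutive pieces to merge them into a single arc; finally the remaining case, where $\nu$ is connected of loop order $1$, is reduced by sliding vertices onto the unique cycle of $\nu$, which becomes the second rim. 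At no point does one need the base cycle to be long in advance.

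Your step (1), by contrast, asks you to enlarge a cycle via IHX until it passes through most vertices, and your step (2) then asks to ``split'' that single long cycle into two rims via IHX. Neither move is the kind IHX naturally performs: IHX reroutes a single edge relative to an adjacent one, it does not obviously lengthen a chosen cycle, and there is no local IHX move that cuts one cycle into two. You acknowledge that (1) is the hard part and say you would ``follow Kneissler's bookkeeping'' with a well-ordering, but Kneissler's bookkeeping is precisely the complement-reduction above, not a cycle-growth argument; so as written the proposal defers the actual content to a scheme that does not match the one cited. (There is also a small structural slip: a barrel graph has $N$ rungs forming the full matching, not one distinguished rung plus $N-1$ others; removing the two rims already leaves exactly the matching.)
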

Next, Kneissler derives a set of relations on the barrel diagrams that then allow to compute an upper bound on the dimension of $H^{top}(\GC_3^{g\lp})$.
Here we shall proceed similarly. We treat the cases of even and odd $n$ uniformly and extend Kneissler's result to the following:
\begin{prop}
For any $n$ and for $g\geq 4$, any top degree element of $\GC_n^{g\lp}$ is cohomologous to a linear combination of barrel diagrams $B_\pi$, for $\pi$ ranging over elements of $S_{g-1}$.
\end{prop}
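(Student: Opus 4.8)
The plan is to follow the strategy by which Kneissler proved Proposition~\ref{prop:KneisslerOrig} for $n=3$, phrasing the combinatorial reductions once and for all as manipulations of plain trivalent graphs and handling the parity-dependent signs uniformly at the end. As a preliminary step I would record the reformulation underlying this section: every trivalent $g$-loop graph is a top-degree cocycle of $\GC_n^{g\lp}$, and, as explained around \eqref{equ:IHX}, the top-degree coboundaries are spanned by the IHX relators. Thus $H^{top}(\GC_n^{g\lp})$ is the quotient of the span of oriented trivalent $g$-loop graphs by the IHX relations, and it suffices to show that, modulo IHX, every oriented trivalent graph $\Gamma$ on $2(g-1)$ vertices equals a $\Q$-linear combination of barrels $B_\pi$, $\pi\in S_{g-1}$, by an argument that works verbatim for both parities of $n$.

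The heart of the matter is an inductive ``straightening'' of $\Gamma$ into barrel shape by IHX moves. It helps to observe that a barrel is exactly a trivalent graph that admits a perfect matching $M$ such that $\Gamma\setminus M$ is a disjoint union of exactly two $N$-cycles ($N=g-1$) and such that $M$ joins the two cycles; the two cycles are the rims and $M$ the set of rungs. Starting from an arbitrary $\Gamma$, I would argue in two stages, each governed by a nonnegative complexity measure that a suitable IHX move strictly decreases, so that the induction terminates exactly at barrels. In the first stage one fixes a perfect matching $M$ (available once bridges have been dealt with, see below) and reduces the number of cycles of the $2$-factor $\Gamma\setminus M$ to two, of equal length; a single IHX move, applied around a carefully chosen cycle edge, can be arranged either to merge two cycles of the current $2$-factor or to rebalance the lengths of two cycles, and the measure combines the number of cycles with a length-imbalance quantity. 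In the second stage, with the two rims in place, one removes the matching edges that are chords of a single rim rather than rungs, applying IHX at a vertex incident to such a chord, the measure now being the number of chords; when no such edge remains, $\Gamma$ is a barrel. Throughout one uses the elementary facts that tadpole graphs are excluded and that a graph with an orientation-reversing automorphism (for instance an extraneous double edge when $n$ is even) vanishes, so that degenerate intermediates never obstruct the reduction.

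The step I expect to be the main obstacle, and the one in which the generalization genuinely differs from Kneissler's $n=3$ argument, is the orientation bookkeeping. For odd $n$ the orientation datum is an ordering of vertices and half-edges and the IHX relation is the usual antisymmetric one, whereas for even $n$ it is an ordering of edges, on which contracting and re-expanding an edge act differently; I would perform, once and for all, the finite local computation showing that each elementary IHX move used in the two stages has the same net effect on the two descriptions of the orientation, up to the overall sign already built into \eqref{equ:IHX}, so that the identical $\Q$-combination of barrels is produced in either parity. A secondary difficulty is to make the termination genuinely rigorous: one must confirm that the complexity measures are bounded below and that in \emph{every} non-barrel trivalent $g$-loop graph one of the two moves is available, which entails, besides the generic case, disposing of bridged graphs (which I expect to reduce by a short preliminary IHX argument or to vanish by symmetry) and checking the small cases $g=4,5$ by hand.
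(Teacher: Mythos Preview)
Your two-stage perfect-matching reduction is a genuinely different organizing principle from the paper's argument, which instead fixes a single base cycle $S$, pushes all remaining vertices onto $S$ by IHX, and then climbs back up through a hierarchy of ``circle-like'' intermediate shapes (Cases 6 $\to$ 5 $\to$ 4 $\to$ 3 $\to$ 2 $\to$ 1 $\to$ 0 in the paper's numbering). Either scheme could in principle work, and your characterization of barrels via a perfect matching whose complementary 2-factor is two equal cycles is correct and pleasant.

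However, you have misidentified the obstacle. The IHX relation \eqref{equ:IHX} is, by definition, the image of the differential on a graph with one 4-valent vertex, and as such it holds \emph{identically} for both parities of $n$; there is no separate ``finite local computation'' to do, and Kneissler's IHX manoeuvres go through verbatim. The place where even and odd $n$ genuinely diverge is the question of which specific intermediate graphs vanish by symmetry. In the paper's proof this is isolated as Case~2: the ``circle diagrams'' $C_\pi$ (a Hamiltonian base cycle with an open chain on top) are shown to all be cohomologous to one another modulo barrels, and then one exhibits a particular $C_\pi$ that carries an orientation-reversing automorphism for even $n$---namely $C_{\mathrm{id}}$ when $g$ is even, and $C_\tau$ for a transposition $\tau$ when $g\geq 5$ is odd. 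This is precisely where the hypothesis $g\geq 4$ enters, and the Remark following the proof explains why $g=3$ genuinely fails for even $n$.

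Your outline contains no analogue of this step. In your Stage~1, once the 2-factor has been reduced to two cycles but before the lengths are equalized, you will inevitably pass through (or have to dispose of) configurations where all matching edges are chords of a single Hamiltonian cycle---these are exactly the circle diagrams---and you give no mechanism for showing they lie in the span of barrels. Treating symmetry-vanishing as a way to discard ``degenerate intermediates'' is not enough: for even $n$ it is the decisive ingredient, and it must be invoked at a specific, identifiable point of the reduction, with the role of $g\geq 4$ made explicit rather than deferred to a hand check.
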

\begin{proof}
    The case of odd $n$ is Proposition \ref{prop:KneisslerOrig} shown by Kneissler, hence we assume that $n$ is even. We then use a variation of Kneissler's argument \cite[section 5.1]{KneisslerNumber}, with a modified last step.
    Start with an arbitrary trivalent graph $\Gamma$. It has at least one cycle. Hence we can single out an arbitrary cycle $S$ in the graph, so that the graph looks like this, with the chosen cycle drawn at the bottom with fat edges:
    \begin{equation}\label{equ:std form}
    \begin{tikzpicture}
        \node[int] (u) at (0,0) {}; 
        \node[int] (u1) at (.5,0) {}; 
        \node[int] (u2) at (1,0) {}; 
        \node[int] (u3) at (1.5,0) {}; 
        \node[int] (u4) at (2,0) {}; 
        \node[int] (u5) at (2.5,0) {};
        \node at (2,-.5) {$S$};
        \node[draw, circle] (nu) at (1.25, 1) {$\nu$};
        \draw[very thick] (u) edge[bend right] (u5) edge (u2) (u3) edge (u2) edge (u4) (u4) edge (u5);
        \draw (nu) edge (u) edge (u1) edge (u2) edge (u3) edge (u4) edge (u5);
    \end{tikzpicture}
    \end{equation}
    The complement $\nu$ of our chosen cycle we call the remainder.

    Case 0): Suppose that the remainder $\nu$ is connected with a single cycle containing all vertices of $\nu$. Then $\Gamma$ is already a barrel graph $B_\pi$, for a suitable permutation $\pi$.

    Case 1): Suppose that $\nu$ is connected and of loop order 1. Then we may use the IHX relations \eqref{equ:IHX} to move vertices either to our base cycle $S$ or to the unique cycle of $\nu$, until we end up with a linear combination of graphs as in Case 0), i.e., barrel graphs.

    Case 2): Suppose that $\Gamma$ has the special shape 
    \[
    C_\pi = 
    \begin{tikzpicture}
        \node[int] (u) at (-.4,0) {}; 
        \node[int] (u1) at (.5,0) {}; 
        \node[int] (u2) at (1,0) {}; 
        \node[int] (u3) at (1.5,0) {}; 
        \node[int] (u4) at (2,0) {}; 
        \node[int] (v1) at (.5,1) {}; 
        \node[int] (v2) at (1,1) {}; 
        \node[int] (v3) at (1.5,1) {}; 
        \node[int] (v4) at (2,1) {}; 
        \node[int] (u5) at (2.9,0) {};
        \node at (2,-.6) {$S$};
        \draw[very thick] (u) edge[bend right] (u5) edge (u2) (u3) edge (u2) edge (u4) (u4) edge (u5);
        \draw (v1) edge (u) edge (v2) edge (u1) 
        (v2) edge (v3) edge (u2) 
        (v3) edge (u3) edge (v4)
        (v4) edge (u4) edge (u5);
        \draw[dashed, fill=white] (.3, .3) rectangle (2.2,.7);
        \node at (1.25,.5) {$\pi$};
    \end{tikzpicture}
    \]
    for $\pi$ some permutation. (Kneissler calls such graphs circle diagrams.)
    Then using the IHX relation to permute the attachment points of the legs of $\nu$ to $S$, we see that for any $\pi,\pi'\in S_n$ we have that $C_\pi\equiv\pm C_{\pi'}$ modulo IHX relations and graphs as in Case 1).
    But if the loop order $g$ is even, then $C_{id}=0$ by symmetry. If $g\geq 5$ is odd then $C_\tau=0$ by symmetry for any pair transposition $\tau$. Note that the case $g=3$ is excluded in the Proposition, so we find that all graphs of the form $C_\pi$ can be written as linear combinations of barrel graphs modulo the IHX relation.
    
    Case 3): Suppose that $\Gamma$ has the more general form 
    \[
    C_{\pi_1,\pi_2,\dots,\pi_k} = 
    \begin{tikzpicture}
        \node[int] (u) at (-.4,0) {}; 
        \node[int] (u1) at (.5,0) {}; 
        \node[int] (u2) at (1,0) {}; 
        \node[int] (u3) at (1.5,0) {}; 
        \node[int] (u4) at (2,0) {}; 
        \node[int] (v1) at (.5,1) {}; 
        \node[int] (v2) at (1,1) {}; 
        \node[int] (v3) at (1.5,1) {}; 
        \node[int] (v4) at (2,1) {}; 
        \node[int] (u5) at (2.9,0) {};
        \node (dd) at (3.9,0) {$\dots$};
        \begin{scope}[xshift=5cm]
        \node[int] (bu) at (-.4,0) {}; 
        \node[int] (bu1) at (.5,0) {}; 
        \node[int] (bu2) at (1,0) {}; 
        \node[int] (bu3) at (1.5,0) {}; 
        \node[int] (bu4) at (2,0) {}; 
        \node[int] (bv1) at (.5,1) {}; 
        \node[int] (bv2) at (1,1) {}; 
        \node[int] (bv3) at (1.5,1) {}; 
        \node[int] (bv4) at (2,1) {}; 
        \node[int] (bu5) at (2.9,0) {};
        \draw (bv1) edge (bu) edge (bv2) edge (bu1) 
        (bv2) edge (bv3) edge (bu2) 
        (bv3) edge (bu3) edge (bv4)
        (bv4) edge (bu4) edge (bu5);
        \draw[dashed, fill=white] (.3, .3) rectangle (2.2,.7);
        \node at (1.25,.5) {$\pi_k$};
        \end{scope}
        \draw[very thick] (u) edge[bend right] (bu5) edge (u2) (u3) edge (u2) edge (u4) (u4) edge (u5) (dd) edge (u5) edge (bu5);
        \draw (v1) edge (u) edge (v2) edge (u1) 
        (v2) edge (v3) edge (u2) 
        (v3) edge (u3) edge (v4)
        (v4) edge (u4) edge (u5);
        \draw[dashed, fill=white] (.3, .3) rectangle (2.2,.7);
        \node at (1.25,.5) {$\pi_1$};
    \end{tikzpicture}
    \]
    Then using the IHX relation at the junction between the first two components
    we can reduce $k$, and by induction we have reduced to Case 2).

    Case 4): 
    Suppose our graph has the form 
\[
    \begin{tikzpicture}
        \node[int, label=90:{$\scriptstyle u$}] (u) at (0,0) {}; 
        \node[int] (u1) at (.5,0) {}; 
        \node[int] (u2) at (1,0) {}; 
        \node[int] (u3) at (1.5,0) {}; 
        \node[int] (u4) at (2,0) {}; 
        \node[int, label=90:{$\scriptstyle v$}] (u5) at (2.5,0) {}; 
        \node[ext] (nu1) at (1.25,1) {$\nu_1$};
        \node (dd) at (3.9,0) {$\dots$};
        \begin{scope}[xshift=5cm]
        \node[int] (bu) at (0,0) {}; 
        \node[int] (bu1) at (.5,0) {}; 
        \node[int] (bu2) at (1,0) {}; 
        \node[int] (bu3) at (1.5,0) {}; 
        \node[int] (bu4) at (2,0) {}; 
        \node[ext] (nuk) at (1.25,1) {$\nu_k$};
        \node[int] (bu5) at (2.5,0) {};
        \draw (nuk) edge (bu1) edge (bu2) edge (bu3) edge (bu4) edge (bu5) edge (bu);
        \end{scope}
        \draw[very thick] (u) edge[bend right] (bu5) edge (u2) (u3) edge (u2) edge (u4) (u4) edge (u5) (dd) edge (u5) edge (bu5);
        \draw (nu1) edge (u1) edge (u2) edge (u3) edge (u4) edge (u5) edge (u);
    \end{tikzpicture}
    \]
    with $\nu_1,\dots, \nu_k$ trees. Consider one such tree, say $\nu_1$.
    The endpoints $u,v$ on our cycle are connected by a unique path $P$ in $\nu_1$. 
    By a similar argument as above, we may use the IHX relations to move all vertices of the tree onto the path $P$. Doing the same for all components $\gamma_1,\dots,\gamma_k$ we have hence reduced to Case 3).

    Case 5): Suppose that $\Gamma$ has the form of Case 4) above, but now the legs of the trees $\nu_j$ trees might attach in arbitrary order to the base loop $S$. Then using the IHX relations at the base loop $S$ again to disentangle the legs at the base loop, we reduce to Case 4). (Note that $k$ will typically be reduced in the process.)

    Case 6): Finally suppose that $\Gamma$ has the general form \eqref{equ:std form} with $\nu$ possibly disconnected and/or or higher loop order. Then we may use the IHX relations to move all vertices onto the base loop $S$, whence the graph is necessarily of the form of Case 6). (In this case with each $\nu_j$ having only one edge.)
\end{proof}
\begin{rem}
Note that for even $n$ we need to exclude the case $g=3$ for the following reason. For both even and odd $n$,  $H(\GC_n^{3\lp})$ is one-dimensional and generated by the tetrahedron graph cocycle
\[
T=
\begin{tikzpicture}[yshift=-.5cm]
    \node[int] (v1) at (0,0) {};
    \node[int] (v2) at (1,0) {};
    \node[int] (v3) at (0,1) {};
    \node[int] (v4) at (1,1) {};
    \draw (v1) edge (v2) edge (v3) edge (v4)  
    (v2) edge (v3) edge (v4) 
    (v3) edge (v4);
\end{tikzpicture}.
\]
For $n$ odd $T$ is cohomologous to a multiple of the graph 
\[
\begin{tikzpicture}[yshift=-.5cm]
    \node[int] (v1) at (0,0) {};
    \node[int] (v2) at (1,0) {};
    \node[int] (v3) at (0,1) {};
    \node[int] (v4) at (1,1) {};
    \draw (v1) edge[bend left] (v2) edge[bend right] (v2) edge (v3)  
    (v2) edge (v4) 
    (v3) edge[bend left] (v4) edge[bend right] (v4);
\end{tikzpicture},
\]
which is the only barrel graph in loop order 3.
This latter graph is however zero for $n$ even by symmetry. The reduction to barrel diagrams is hence not possible for even $n$ and $g=3$.
\end{rem}

Next, let 
\[
B_g=\vspan \{B_\pi \mid \pi \in S_{g-1}\}\subset \GC_n^{g\lp}
\]
be the subspace spanned by barrel diagrams. 
Let $V\subset \GC_n^{g\lp, top-1}$ be an arbitrary subspace of elements of degree top-1.
Then the following is obvious:
\begin{cor}\label{cor:upper kneissler}
For any $n$ and $g\geq 4$ and any choice of subspace $V\subset \GC_n^{g\lp, top-1}$ we have that
\[
\vdim H^{top}(\GC_n^{g\lp}) \leq \vdim B_g - \mathrm{rank} (d\mid_{d^{-1}B_g}),
\]
where $d^{-1}B_g$ is the preimage of $B_g$ under the differential $d:V\to \GC_n^{g\lp, top}$.
\end{cor}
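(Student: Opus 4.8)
The statement is labelled obvious because all of its content sits in the preceding Proposition; the plan is merely to unwind the definitions and run a short dimension count, so there is no genuine obstacle here. The one structural input to keep in mind is that $\GC_n^{g\lp}$ vanishes in degrees strictly above $top=(3-n)g-3$ (recalled in Section~\ref{sec:cohom discussion}), so that every element of $\GC_n^{g\lp,top}$ is automatically a cocycle and hence
\[
H^{top}(\GC_n^{g\lp}) = \GC_n^{g\lp,top}\big/\, d\big(\GC_n^{g\lp,top-1}\big).
\]

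First I would invoke the preceding Proposition (valid for $g\geq 4$, which is where that hypothesis enters) to note that the composite $B_g\hookrightarrow \GC_n^{g\lp,top}\twoheadrightarrow H^{top}(\GC_n^{g\lp})$ is surjective, with kernel $B_g\cap d\big(\GC_n^{g\lp,top-1}\big)$. This already gives
\[
\vdim H^{top}(\GC_n^{g\lp}) = \vdim B_g - \vdim\big(B_g\cap d(\GC_n^{g\lp,top-1})\big),
\]
so it remains to bound the subtracted term from below by $\mathrm{rank}\big(d\mid_{d^{-1}B_g}\big)$. Unwinding the definition of the preimage, for the chosen subspace $V\subset\GC_n^{g\lp,top-1}$ one checks that $d\big(d^{-1}B_g\big)=d(V)\cap B_g$: the inclusion $\subseteq$ is immediate, and conversely if $x=dv$ with $v\in V$ and $x\in B_g$, then $dv=x\in B_g$ forces $v\in d^{-1}B_g$. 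Hence $\mathrm{rank}\big(d\mid_{d^{-1}B_g}\big)=\vdim\big(d(V)\cap B_g\big)$, and since $V\subseteq\GC_n^{g\lp,top-1}$ we get $d(V)\cap B_g\subseteq d\big(\GC_n^{g\lp,top-1}\big)\cap B_g$, so
\[
\mathrm{rank}\big(d\mid_{d^{-1}B_g}\big)=\vdim\big(d(V)\cap B_g\big)\ \leq\ \vdim\big(B_g\cap d(\GC_n^{g\lp,top-1})\big).
\]
Substituting this into the previous display yields exactly the asserted inequality.

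Two remarks conclude the plan. The choice of $V$ is unconstrained precisely because enlarging $V$ can only enlarge $d(V)\cap B_g$, hence can only decrease the right-hand side; taking $V=\GC_n^{g\lp,top-1}$, the full degree-$(top-1)$ space — which is spanned by the graphs with a single $4$-valent vertex — even turns the inequality into an equality, while for the applications in Section~\ref{sec:top degree} one keeps $V$ general so as to work with whatever explicit generating set is computationally convenient. The only point that genuinely requires proof — and which is supplied by the preceding Proposition rather than here — is the surjectivity of $B_g$ onto $H^{top}(\GC_n^{g\lp})$; everything else in the present Corollary is elementary linear algebra over $\Q$.
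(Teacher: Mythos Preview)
Your argument is correct and is precisely the elementary linear-algebra unwinding that the paper suppresses by declaring the corollary ``obvious''; the only substantive input is the surjectivity of $B_g$ onto $H^{top}$, which you correctly attribute to the preceding Proposition, and the rest is the standard dimension count you carry out. One minor point: the vanishing of $\GC_n^{g\lp}$ above degree $(3-n)g-3$ is not quite what Section~\ref{sec:cohom discussion} records (that section discusses cohomology vanishing), but it is stated explicitly at the start of Section~\ref{sec:top degree} (``They are automatically cocycles''), so your citation could be sharpened.
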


Now we choose for $V$ the span of the following graphs:
\begin{itemize}
    \item Barrel-like graphs with one vertex 4-valent:
    \[
X_\pi :=
\begin{tikzpicture}
    \node[int] (v1) at (0.25,.7) {};
    \node[int] (v2) at (.25,.7) {};
    \node[int] (v3) at (1,.7) {};
    \node[int] (v4) at (1.5,.7) {};
    \node[int] (v5) at (2,.7) {};
    \node[int] (w1) at (0,-.7) {};
    \node[int] (w2) at (.5,-.7) {};
    \node[int] (w3) at (1,-.7) {};
    \node[int] (w4) at (1.5,-.7) {};
    \node[int] (w5) at (2,-.7) {};
    \draw (v1) edge[bend left] (v5) (w1) edge[bend right] (w5)
    (v2) edge (v1) edge (v2) (v3) edge (v2) edge (v4) (v4) edge (v5)
    (w2) edge (w1) edge (v2) edge (w2) (w3) edge (v3) edge (w2) edge (w4) (w4) edge (v4) edge (w5) (w1) edge[bend left] (v1) (w5) edge (v5);
    \draw[dashed, fill=white] (0.2,-.2) rectangle (2.2,.2);
    \node at (1,0) {$\pi$};
\end{tikzpicture}.
\]
\item Barrel-like graphs of the form
    \[
Y_\pi :=
\begin{tikzpicture}
    \node[int] (l) at (-.5,-.7) {};
    \node[int] (v1) at (0,.7) {};
    \node[int] (v2) at (.5,.7) {};
    \node[int] (v3) at (1,.7) {};
    \node[int] (v4) at (1.5,.7) {};
    \node[int] (v5) at (2,.7) {};
    \node[int] (w1) at (0,-.7) {};
    \node[int] (w2) at (.5,-.7) {};
    \node[int] (w3) at (1,-.7) {};
    \node[int] (w4) at (1.5,-.7) {};
    \node[int] (w5) at (2,-.7) {};
    \draw (v1) edge[bend left] (v5) (w1) edge[bend right] (w5) edge (l)
    (v2) edge (v1) edge (v2) (v3) edge (v2) edge (v4) (v4) edge (v5)
    (w2) edge (w1) edge (v2) edge (w2) (w3) edge (v3) edge (w2) edge (w4) (w4) edge (v4) edge (w5) (l) edge[bend left] (v1) edge[bend right] (v1) (w5) edge (v5);
    \draw[dashed, fill=white] (-.2,-.2) rectangle (2.2,.2);
    \node at (1,0) {$\pi$};
\end{tikzpicture}.
\]
\end{itemize}
Hence define
\begin{equation}\label{equ:V def}
V_g:=\vspan \{ X_\pi, Y_\pi \mid \pi \in S_{g-2}\}.
\end{equation}

\subsection{Computational aspects and results}
In practice, we will compute $\mathrm{rank} (d\mid_{d^{-1}B_g})$ as follows. Pick some complement $B_g^\perp$ of $B_g$ in $\GC_n^{g\lp, top}$ and decompose $d:V\to \GC_n^{g\lp, top}$ accordingly as 
\[
d = \begin{pmatrix}
    d^\parallel \\
    d^\perp
\end{pmatrix}.
\]
Then we have that 
\[
\mathrm{rank} (d\mid_{d^{-1}B_g}) = \mathrm{rank}(d) -\rk(d^\perp).
\]
Furthermore it is actually not necessary to take a full complement $B_g^\perp$, but just the span of the graphs that can actually be produced by the differential from the generators of $V_g$. For our choice of $V_g$ as in \eqref{equ:V def} we can hence set 
\begin{equation}\label{equ:compl}
B_g^\perp = \vspan\left(\{ A_\pi \mid \pi\in S_{g-2}, \nexists \pi': A_\pi \cong B_{\pi'} \} \cup \{A_\pi' \mid \pi\in S_{g-2}, \nexists \pi': A_\pi' \cong B_{\pi'} \} \right)
\end{equation}
with 
\begin{align*}
A_\pi &= 
    \begin{tikzpicture}
    \node[int] (w0) at (-.5,-.7) {};
    \node[int] (v0) at (-.5,.7) {};
    \node[int] (v1) at (0,.7) {};
    \node[int] (v2) at (.5,.7) {};
    \node[int] (v3) at (1,.7) {};
    \node[int] (v4) at (1.5,.7) {};
    \node[int] (v5) at (2,.7) {};
    \node[int] (w1) at (0,-.7) {};
    \node[int] (w2) at (.5,-.7) {};
    \node[int] (w3) at (1,-.7) {};
    \node[int] (w4) at (1.5,-.7) {};
    \node[int] (w5) at (2,-.7) {};
    \draw (v1) edge[bend left] (v5) edge (v0) 
    (w1) edge (v0)
    (w0) edge[bend right] (w5) edge (w1)
    (v2) edge (v1) edge (v2) (v3) edge (v2) edge (v4) (v4) edge (v5)
    (w2) edge (w1) edge (v2) edge (w2) (w3) edge (v3) edge (w2) edge (w4) (w4) edge (v4) edge (w5) 
    (w0) edge[bend left] (v0) (w5) edge (v5);
    \draw[dashed, fill=white] (-.6,-.2) rectangle (2.2,.2);
    \node at (1,0) {$\pi$};
\end{tikzpicture}
&
A_\pi' &= 
    \begin{tikzpicture}
    \node[int] (w0) at (-.5,-.7) {};
    \node[int] (v0) at (-.5,.7) {};
    \node[int] (v1) at (0,.7) {};
    \node[int] (v2) at (.5,.7) {};
    \node[int] (v3) at (1,.7) {};
    \node[int] (v4) at (1.5,.7) {};
    \node[int] (v5) at (2,.7) {};
    \node[int] (w1) at (0,-.7) {};
    \node[int] (w2) at (.5,-.7) {};
    \node[int] (w3) at (1,-.7) {};
    \node[int] (w4) at (1.5,-.7) {};
    \node[int] (w5) at (2,-.7) {};
    \draw (v1) edge[bend left] (v5) edge (v0) 
    (w1) edge[bend right] (w5) 
    (w0) edge (w1)
    (v2) edge (v1) edge (v2) (v3) edge (v2) edge (v4) (v4) edge (v5)
    (w2) edge (w1) edge (v2) edge (w2) (w3) edge (v3) edge (w2) edge (w4) (w4) edge (v4) edge (w5) 
    (w0) edge[bend left] (v0) edge[bend right] (v0)
    (w5) edge (v5);
    \draw[dashed, fill=white] (-.5,-.2) rectangle (2.2,.2);
    \node at (1,0) {$\pi$};
\end{tikzpicture}.
\end{align*}
Note that in the definition of our complement \eqref{equ:compl} we have to make sure to exclude graphs that are isomorphic to barrel graphs by accidental symmetry.

Next, we do not need to compute $\rk(d^\perp)$ explicitly by the following result:
\begin{lemma}
    The map $d^\perp : V_g\to B_g^\perp$ is surjective, hence $\rk(d^\perp)=\vdim B_g^\perp$.
\end{lemma}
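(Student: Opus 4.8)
The plan is to compute $d^\perp$ explicitly on the spanning graphs $X_\pi$ and $Y_\pi$ of $V_g$ and to match each of them, up to a unit scalar, with one spanning graph of $B_g^\perp$.

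First I would exploit that each of $X_\pi$, $Y_\pi$ has exactly one four-valent vertex, all other vertices being trivalent. Since the differential on $\GC_n$ splits a vertex into two vertices joined by a new edge, with both pieces required to stay at least trivalent, a trivalent vertex admits no splitting; hence $d(X_\pi)$ (and likewise $d(Y_\pi)$) is the signed sum of the three trivalent graphs obtained by splitting the unique four-valent vertex, one term for each way of partitioning its four incident half-edges into two pairs. Of these four half-edges, two lie on a rim of the barrel-like diagram and two are rungs joining the two rims.

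Then I would evaluate the three splittings. For $X_\pi$, the two splittings that pair one rim half-edge with one rung half-edge each reproduce a barrel diagram $B_{\pi'}$, $B_{\pi''}$ with $\pi',\pi''\in S_{g-1}$ (the index obtained from $\pi$ by reinstating the column that $X_\pi$ collapses), hence both lie in $B_g$; the remaining splitting, which pairs the two rim half-edges together and the two rungs together, produces the non-barrel graph $A_\pi$, in which a single vertex carrying the two rungs dangles from the rim. An analogous computation for $Y_\pi$ yields two barrel terms together with the single non-barrel term $A_\pi'$ (the one containing a double edge). Since $A_\pi$ (resp. $A_\pi'$) comes from exactly one of the three splittings and is not isomorphic to either barrel term, it occurs with coefficient $\pm1$, so after projecting away the barrel part one gets
\[
d^\perp(X_\pi)=\pm A_\pi,\qquad d^\perp(Y_\pi)=\pm A_\pi',
\]
the right-hand side being understood to vanish precisely when $A_\pi$ (resp. $A_\pi'$) is isomorphic to a barrel or carries an orientation-reversing automorphism, i.e. exactly the cases excluded from $B_g^\perp$ in \eqref{equ:compl}. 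Since by \eqref{equ:compl} the graphs $A_\pi,A_\pi'$ with $\pi\in S_{g-2}$ that are not isomorphic to barrels span $B_g^\perp$, this exhibits every generator of $B_g^\perp$ as a unit multiple of an element in the image of $d^\perp$, so $d^\perp$ is surjective and $\rk(d^\perp)=\vdim B_g^\perp$.

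The hard part is the middle step: one must verify carefully that for each of the families $X_\pi$, $Y_\pi$ precisely one of the three vertex splittings leaves $B_g$, identify that term with the correct $A_\pi$ resp. $A_\pi'$ together with its index $\pi\in S_{g-2}$, and check that orientation signs and automorphism multiplicities never annihilate the surviving coefficient in the cases not already discarded in \eqref{equ:compl}. The even/odd $n$ distinction enters here only through whether the double-edge graphs $A_\pi'$ are nonzero in $\GC_n$, which must be tracked but is likewise covered by the exclusion clause in \eqref{equ:compl}.
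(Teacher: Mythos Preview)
Your approach is the same as the paper's, and your treatment of $X_\pi$ is correct: two of the three splittings of the unique $4$-valent vertex give barrels and the third gives $\pm A_\pi$, so $d^\perp(X_\pi)=\pm A_\pi$.

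There is, however, a computational slip in your treatment of $Y_\pi$. The $4$-valent vertex of $Y_\pi$ is the top-rim vertex $v_1$ whose four half-edges are the two rim edges together with the two parallel edges to $l$. The splitting that pairs the two rim half-edges against the two $l$-half-edges indeed produces $A_\pi'$. But the other two splittings (each pairing one rim half-edge with one $l$-edge) do \emph{not} yield barrels: they lengthen the top rim by one vertex and leave $l$ as an extra trivalent vertex attached to two consecutive top-rim vertices and to $w_1$. After swapping the roles of the two rims this is exactly a graph of the form $A_{\pi''}$. So in fact
\[
d^\perp(Y_\pi)=\pm A_\pi' + (\text{linear combination of }A_{\pi''}\text{'s}),
\]
which is precisely what the paper asserts.

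This does not damage your conclusion. Since the $X_\pi$ already hit every $A_\pi$ generator of $B_g^\perp$, the matrix of $d^\perp$ in the basis $(X_\pi,Y_\pi)$ versus $(A_\pi,A_\pi')$ is block upper-triangular with $\pm1$'s on the diagonal, hence still surjective. Your argument becomes correct once you replace ``two barrel terms'' by ``terms of the form $A_{\pi''}$ already in the image'' in the $Y_\pi$ step; this is exactly the two-line proof the paper gives.
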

\begin{proof}
The differential of $X_\pi$ is $\pm A_\pi$ plus barrel graphs. The differential of $Y_\pi$ is $A_\pi'$, plus graphs of the form $A_\pi$. Hence the lemma follows.    
\end{proof}
We hence obtain for our upper bound on the top cohomology
\[
\vdim H^{top}(\GC_n^{g\lp}) \leq \vdim B_g - \rk(d) +\vdim B_g^\perp. 
\]
The next issue to be addressed is that for memory reasons we can only compute matrix ranks over finite fields $\F_p$, at least for the larger values of $g$. However, since passing to a finite field can only decrease an integer matrix's rank, and since the rank only enters with negative sign in the above formula, we finally find that
\[
\vdim_{\Q} H^{top}(\GC_n^{g\lp}) \leq \vdim B_g - \rk_{\F_p}(d) +\vdim B_g^\perp. 
\]

We computed the dimensions of the spaces $B_g$ and $B_g^\perp$ as well as the $\F_p$-rank of $d:V\to B_g\oplus B_g^\perp$. The results are displayed in Figure \ref{fig:kneissler results}.
\begin{figure}
\begin{tabular}{c|c|c|c|c|c|}
$g$ & $\vdim B_g$ & $\vdim B_g^\perp$ & $\vdim V_g$ & $\rk d$ & upper bound $H^{top}(\GC_n^{g\lp})$\\
\hline
& \multicolumn{1}{c}{\bf even $n$} \\
\hline
5  &  0  &  0  &1&  0   &  0  \\
6  &  2  &  2  &4&  3   &  1  \\
7  &  2  &  6  &19&  8   &  0  \\
8  &  6  &  39  &143&  45   &  0  \\
9  &  66  &  369  &1237&  435   &  0  \\
10  &  542  &  3484  &10652&  4025  &  1  \\
11 & 4160 & 34485 &101450& 38645&  0\\
12 & 38255 & 381132 &1067721& 419387&  0\\
13 & 396187 & 4566456 &12213628& 4962642  & 1 \\
14 & 4433632 & 58688199 &150806455& 63121830 & 1\\
\hline
& \multicolumn{1}{c}{\bf odd $n$} \\
\hline
5  &  2  &  1  &1&  1  &    2  \\
6  &  3  &  3  &4&  4  &    2  \\
7  &  9  &  13  &27&  19    &  3  \\
8  &  27  &  65  &167&  88    &  4  \\
9  &  121  &  443  &1303&  559    &  5  \\
10  &  652  &  3778  &11091&  4424    &  6  \\
11 & 4779 & 36217 &104692& 40988& 8\\
12 & 41229 & 391291 &1092083& 432511  & 9 \\
13 & 410706 & 4636086 &12431753& 5046781  & 11 \\
14 & 4533035 & 59385487 &153050537& 63918509  & 13\\
\hline
\end{tabular}
\caption{\label{fig:kneissler results} Numerical results for the upper bound on $H^{top}(\GC_n^{g\lp})$ according to Corollary \ref{cor:upper kneissler}. For $g\geq 11$ the rank of $d$ is computed over a finite field ($\F_{3323}$). We nevertheless get valid bounds on the dimension of the cohomology with rational coefficients as discussed in the text. }
\end{figure}
Note that the resulting matrices of $d$ are very large with size tens of millions in the most difficult cases. However, they are also very sparse.
To compute their rank we use sparse Gaussian elimination with the following custom pivoting heuristic: We first try to choose the pivots so as to kill the matrix rows corresponding to elements of $B_g^\perp$. Then we try to choose pivots to eliminate the matrix columns corresponding to elements $X_\pi$.
Overall, this pivoting heuristic produces an algorithm that is roughly analogous to the ``ad hoc'' algorithm of \cite{KneisslerNumber}. Note however that we fully take into account graph isomorphisms, whereas loc. cit. only treats the $B_\pi$ as formal symbols, modulo some (ad hoc) isomorphisms.

In the cases of odd $n$ our upper bound agrees with a lower bound found by Vogel on the top cohomology, cf. \cite[Table 1]{BroadhurstV}, \cite{Vogel}, and hence we have computed the exact top cohomology dimension, as appears in Table \ref{tab:GC3}.
For even $n$ there is no known lower bound on the top cohomology dimension unfortunately, hence our upper bounds are recorded as such in Table \ref{tab:main res even n}. (Except of course in the case $g=12$ when the upper bound is zero.)

Finally, we observe that in all computed cases where we know the dimension of the top cohomology our upper bound is actually sharp. Hence let us raise the following conjecture: 
\begin{conj}
    For the choice of subspace $V=V_g$ as in \eqref{equ:V def} the dimension bound of Corollary \ref{cor:upper kneissler} is sharp.  
\end{conj}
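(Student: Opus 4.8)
The plan is to first convert the conjecture into a statement about the image of the differential $d\colon\GC_n^{g\lp, top-1}\to\GC_n^{g\lp, top}$. The restriction of $d$ to $d^{-1}B_g\subseteq V_g$ surjects onto $d(V_g)\cap B_g$, so $\rk(d\mid_{d^{-1}B_g})=\vdim\,(d(V_g)\cap B_g)$; on the other hand, the reduction-to-barrels Proposition above shows that $B_g\hookrightarrow\GC_n^{g\lp, top}\twoheadrightarrow H^{top}(\GC_n^{g\lp})$ is surjective with kernel $B_g\cap\operatorname{im}d$, so that $\vdim H^{top}(\GC_n^{g\lp})=\vdim B_g-\vdim\,(B_g\cap\operatorname{im}d)$. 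Since $d(V_g)\cap B_g\subseteq\operatorname{im}d\cap B_g$ automatically, Corollary \ref{cor:upper kneissler} applied to $V=V_g$ shows that the conjecture is \emph{equivalent} to the inclusion
\[
B_g\cap\operatorname{im}d\ \subseteq\ d(V_g),
\]
equivalently to $\vdim H^{top}(\GC_n^{g\lp})\ge \vdim B_g-\vdim\,(d(V_g)\cap B_g)$: every exact linear combination of barrels $B_\pi$ is the differential of a linear combination of the graphs $X_\pi,Y_\pi$ of \eqref{equ:V def}, or, dually, there are that many independent top degree cohomology classes supported on barrels. I would take this as the statement to prove.

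On the combinatorial side one must control the full module of IHX relations \eqref{equ:IHX}, i.e. the subspace $\operatorname{im}d$, which is spanned by the elements $dG$ for $G$ a connected graph of loop order $g$ with exactly one $4$-valent vertex. The reduction procedure of the Proposition gives a terminating rewriting of any trivalent graph to barrels, but it only produces surjectivity of $B_g$ onto $H^{top}(\GC_n^{g\lp})$; the kernel, i.e. the exact barrel relations, is not controlled by it. The task is therefore to show that this rewriting is \emph{confluent modulo $d(V_g)$}: running through Cases 0)--6) of that proof, one records for each case the finite set of degree-$(top-1)$ graphs $G$ whose IHX images $dG$ are invoked, and one verifies that all the relations these impose among barrels lie in $d(V_g)$ modulo barrels. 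This is a strengthening of the Lemma above proving surjectivity of $d^\perp$, from a statement about the span $B_g^\perp$ of \eqref{equ:compl} to a statement about the entire relation module; a diamond-lemma style argument would then yield uniqueness of the barrel normal form modulo $d(V_g)$, which is exactly the displayed inclusion.

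For odd $n$ there is also a dual route via lower bounds: Vogel's Lie superalgebra weight systems are known to saturate the upper bound of Figure \ref{fig:kneissler results} in the computed range \cite{Vogel,BroadhurstV,KneisslerNumber}, and it would suffice to show that for all $g$ the weight system map, evaluated on the barrels $B_\pi$, has exactly the predicted rank --- for instance by exhibiting enough $\mathfrak{gl}(m|k)$-weight systems separating the $B_\pi$. This is essentially a refinement of the Broadhurst--Vogel and Kneissler conjectures, and it is really the supply of classes, not the bookkeeping, that is at issue.

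The main obstacle is the even $n$ case. Here no lower bound on $H^{top}(\GC_n^{g\lp})$ is available by any method other than direct computation; there is no analogue of Vogel's weight systems to invoke, and already for $g=13,14$ (and presumably for all larger $g$ with $\vdim B_g>\vdim\,(d(V_g)\cap B_g)$) not even a single witnessing class is known. Hence the genuinely hard step is either to construct such classes --- producing an infinite family of nonzero top degree classes in even graph cohomology, a well-known open problem --- or to carry out the confluence analysis of the combinatorial route in full and uniformly in the permutation $\pi$ and the loop order $g$, thereby showing purely combinatorially that the IHX relations among barrels are generated by the $X$- and $Y$-relations. The latter is, in effect, the content that Kneissler's original ``ad hoc'' treatment leaves unresolved, and I expect the uniform control of the reduction to be the crux.
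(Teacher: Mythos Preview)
The statement you are attempting to prove is labeled \emph{Conjecture} in the paper, and the paper does not prove it. The text immediately preceding it reads: ``Finally, we observe that in all computed cases where we know the dimension of the top cohomology our upper bound is actually sharp. Hence let us raise the following conjecture.'' There is no proof to compare against; the paper offers only numerical evidence for $g\leq 14$ (and, for even $n$, only a conditional match for $g=13,14$ since no lower bound is known there).

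Your reformulation is correct: the conjecture is equivalent to the inclusion $B_g\cap\operatorname{im}d\subseteq d(V_g)$, and your derivation of this equivalence from the reduction-to-barrels Proposition and the definition of the bound is sound. You are also right that for odd $n$ the conjecture would follow from the Broadhurst--Vogel--Kneissler conjectural dimension formula (since the Vogel lower bounds would then match the Kneissler-type upper bounds), so in that parity the statement is subordinate to a well-known open problem rather than a new one.

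However, your proposal is not a proof but a discussion of two possible strategies, and you correctly flag that neither is currently workable. The confluence/diamond-lemma route would require showing that \emph{all} IHX relations among barrels are generated by $dX_\pi$ and $dY_\pi$; the reduction procedure in the paper's Proposition uses IHX moves at many graphs that are not of the form $X_\pi$ or $Y_\pi$, so the bookkeeping you allude to is substantial and has no known uniform treatment. For even $n$ you identify the genuine obstruction yourself: there is no known source of lower bounds on $H^{top}(\GC_n^{g\lp})$, and producing even a single nontrivial class for general $g$ is an open problem. In short, your analysis of the difficulty is accurate, but no proof is supplied --- which is consistent with the paper, where none is claimed.
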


\section{Details on the numerical computations in loop order 11}
To compute the dimension of $H^k(\G_n^{g\lp})$ we use the slightly smaller triconnected graph complex $\G_n^{tri}$, which has been shown in \cite{WillwacherTriconnected} to be quasi-isomorphic to $\G_n$. The complex $\G_n^{tri}$ is defined analogous to $\G_n$, but requiring all graphs to be simple and triconnected.

Let $d_{g,k}$ denote the differentials in that complex.
\[
\cdots \to \G_n^{tri,g\lp,-k} \xrightarrow{d_{g,k}} \G_n^{tri,g\lp,-k+1} \to \cdots
\]
Then we have 
\begin{equation}\label{equ:dim H}
\vdim H^k(\GC_n^{g\lp}) = \vdim H^{-k}(\G_n^{tri,g\lp})
=
\vdim \G_n^{tri,g\lp,-k} - \rk d_{g,k} - \rk d_{g,k+1}.
\end{equation}
We hence compute a basis of the vector spaces $\G_n^{tri,g\lp}$, the sparse matrices $d_{g,k}$ and their ranks. The rank computation is by far the most time consuming step, owed to the large dimension of the vector spaces, see Table \ref{tab:comp res}.
\begin{table}
\begin{tabular}{c|c|c|c}
    \multicolumn{1}{c}{$n=2$}
    \\\hline
    $k$ & $\vdim\G_2^{tri,11\lp,-k}$ & $\rk d_{11,k}$ & $\vdim H^k(\GC_2^{11\lp})$ \\
    \hline
    0 & 4305302	& (known) & 2\\
    1 & 12906363 & (known) & 0\\
    2 & 26371100 & (known) & 0\\
    3 & 37489748 & (known) & 2\\
    4 & 37153253 & (known) & 0 \\	
    5 & 25209403 &	16611125 & 0\\
    6 & 11176863 &	8598278 & 1\\
    7 & 2919555	& 2578584 & 0 \\
    8 & 340971 & 340971 & 0	\\
    \hline 
    \multicolumn{1}{c}{$n=3$}
    \\\hline
    $k$ & $\vdim\G_3^{tri,11\lp,-k}$ & $\rk d_{11,k}$ & $\vdim H^k(\GC_3^{11\lp})$ \\
    \hline
    -11& 4329527 & (known) & 0 \\
    -10& 12941768 & 3500993& 0 \\
    -9& 26392022 & 9440775& 1 \\
    -8&	37472645 & 16951246& 0 \\
    -7&	37062161 & 20521399& 1 \\
    -6& 25057042 & 16540761& 7 \\
    -5&	11066860 & 8516274& 0 \\
    -4&	2894436 & 2550586& 0 \\
    -3 & 343858 & 343850& 8
\end{tabular}
    \caption{\label{tab:comp res} Dimensions of the vector spaces $\G_n^{tri,11\lp,-k}$ and the computed (lower bounds on the) ranks of the differentials $d_{g,k}$ and (upper bounds on the) cohomology dimensions. The ranks marked "(known)" have not been computed numerically, because they can be inferred theoretically, see the arguments in section \ref{sec:cohom discussion} above.}
\end{table}
For the rank computation we work over a finite field $\F_p$ and use a (symmetric) block Coppersmith-Wiedemann algorithm. This algorithm computes a (likely correct) lower bound the rank of a matrix $A$ in two steps: 
\begin{itemize}
    \item Step 1) Choose a number $N$ and $N$ random vectors $u_1,\dots,u_N$. Compute the sequence of $N\times N$ matrices $(u_{\alpha}^tB^ku_{\beta})_{\alpha,\beta=1,\dots,N}$ for $B$ a preconditioned version of $A^TA$.
    \item Step 2): Compute a generating polynomial for this matrix sequence and use it to determine a lower rank bound.
\end{itemize}
The algorithm is well-known, and versions of it have been used for similar problems in the past \cite{DumasEtal}. An implementation can also be found in the Linbox library \cite{Linbox}. 
In practice, the computational bottleneck for large matrices is the sequence computation in Step 1). Our main new contribution is that we use an optimized GPU (CUDA) implementation for this sequence computation (matrix-vector products modulo $p$), which reduced the runtimes from several months to several days compared to a CPU implementation. In practice, we used used values of $N$ between $4$ and $32$.
Step 2) of the algorithm is unchanged compared to prior art, and still runs on the CPU. However, it takes far less time than Step 1) -- a few hours for the largest matrices.

It is important to note that the Coppersmith-Wiedemann algorithm only produces a(n albeit likely correct) lower bound on the $\F_p$-rank of a matrix, and furthermore, the $\F_p$-rank is only a lower bound to the $\Q$-rank. Hence via \eqref{equ:dim H} our computation results a priori only in an upper bound on the ($\Q$-)dimension of the cohomology.
However, if the upper bound happens to be zero, then we know that $H^k(\GC_n^{g\lp})=0$ and moreover we learn that our lower bounds on  $\rk d_{g,k}$ and $\rk d_{g,k+1}$ were both tight. Hence all entries in Tables \ref{tab:GC2} and \ref{tab:GC3} that are zero or have two adjacent zero entries are actually correct and not just an upper bound. Fortunately, this holds for all entries computed in this manner, except possibly the entries $(11,-6)$, $(11,-7)$ in Table \ref{tab:GC3}. These entries are hence to be considered (likely correct) upper bounds, or more precisely the dimensions are either $7$ and $1$ as computed or $6$ and $0$.

\section{Representatives of cohomology classes}
\label{sec:reps}
We now focus solely on the cohomology of $\GC_2$, see Table \ref{tab:GC2}. We desire to write down fairly explicit representatives for some of the cohomology classes.
To this end it will be easier to work with an extended version $\GC_2^2$ of the Kontsevich graph complex, which is defined by merely allowing bivalent vertices and self-edges in graphs.
We recall the following well-known results: Obviously $\GC_2\subset \GC_2^2$ is a dg Lie subalgebra. 
We furthermore have a decomposition into a direct sum of dg vector spaces 
\begin{equation}\label{equ:GC2 direct}
\GC_2^2 \cong \GC_2 \oplus X \oplus \bigoplus_{k\geq 0} \Q L_{4k+1},
\end{equation}
where $L_{4k+1}$ is the loop graph with $4k+1$ edges and vertices 
\[
L_{4k+1} = \begin{tikzpicture}
    \node (v0) at (0:.7) {$\cdots$};
    \node[int] (v1) at (45:.7) {};
    \node[int] (v2) at (90:.7) {};
    \node[int] (v3) at (135:.7) {};
    \node[int] (v4) at (180:.7) {};
    \node[int] (v5) at (225:.7) {};
    \node[int] (v6) at (270:.7) {};
    \node[int] (v7) at (315:.7) {};
    \draw (v1) edge (v2) edge (v0) 
    (v3) edge (v2) edge (v4)
    (v5) edge (v6) edge (v4) 
    (v7) edge (v0) edge (v6);
\end{tikzpicture}
\]
and $X$ is the subcomplex spanned by graphs of loop order $\geq 2$ that contain at least one bivalent vertex. This subcomplex $X$ is acyclic, $H(X)=0$, so that
\[
H(\GC_2^2) \cong H(\GC_2) \oplus \bigoplus_{k\geq 0} \Q L_{4k+1}.
\]
Furthermore, due to the direct sum decomposition \eqref{equ:GC2 direct} we have a projection of dg vector spaces 
\[
\pi : \GC_2^2 \to \GC_2.
\]
This map is not compatible with the Lie brackets, though.
Furthermore, there is a (complete) grading on $\GC_2^2$ and $\GC_2$ by the loop order, and all morphisms considered respect that grading.

Next we consider the variant $\GC_0^2$ of the graph complex. It satisfies
\[
\GC_0^{2, g\lp} = \GC_2^{2, g\lp}[-2g].
\]
In this dg Lie algebra the element $L_1=\tadpole$ is a Maurer-Cartan element, so that can consider the twisted dg Lie algebra 
\[
(GC_0^2, \delta + [L_1,-]).
\]
We shall write $\nabla := [L_1,-]$ for short. There is still a descending complete filtration by loop order on the twisted graph complex. The spectral sequence with respect to this filtration converges to a one-dimensional vector space
\[
E^1 = H(\GC_0^2, \delta) \Rightarrow \Q L_1,
\]
as shown in \cite{KWZ}.
This is the spectral sequence that we have already used in section \ref{sec:cohom discussion} above. 
It is furthermore shown in \cite{KWZ} that in this spectral sequence the element $L_{4k+1}$ (for $k\geq 1$) cancels the generators of the Grothendieck-Teichmüller group $\sigma_{2k+1}$ in 
\[
H^0(\GC_2^{2k+1\lp})\cong H^{4k+2}(\GC_0^{2k+1\lp})
\]
on some page of the spectral sequence.
Concretely, this implies that there are elements 
\[
\hat L_{4k+1} = L_{4k+1} + (\cdots),
\]
with the terms $(\cdots)$ of loop order $\geq 2$, so that 
\[
(\delta + \nabla) \hat L_{4k+1} = (const) W_{2k+1} + (\cdots) =: \sigma_{2k+1},
\]
with $W_k$ the $2k+1$-wheel graph, $(const)$ a nonzero constant and $(\cdots)$ a linear combination of other graphs of loop order $2k+1$.
One can use the elements $\hat L_{4k+1}$ and $\sigma_{2k+1}$ to give explicit formulas for most of the cohomology classes appearing in Table \ref{tab:GC2}.
First, the entries of the 0-th row $H^0(\GC_2)$ can be identified with the free Lie algebras in the symbols $\sigma_{2k+1}$, $k=1,2,\dots$.
To get the representative for the first class of non-zero degree, located at position $(g,k)=(6,3)$, note that this class has to cancel $[\sigma_3,\sigma_5]$ in our spectral sequence.
However, the element $[\hat L_5, \sigma_5]$ is concentrated in loop orders $\geq 6$ and satisfies 
\[
(\delta + \nabla) [\hat L_5, \sigma_5] = [(\delta + \nabla)\hat L_5, \sigma_5]
= [\sigma_3,\sigma_5].
\]
It follows that the trivalent loop order 6 part 
\[
X_6 := \pi ([\hat L_5, \sigma_5])_6 \in \GC_0^{6\lp} \cong \GC_2^{6\lp}[-4]
\]
is a $\delta$-cocycle, $\delta X_6=0$, and hence defines a cohomology class in $H^3(\GC_2^{6\lp})$.
If this element was $\delta$-exact, then by adding a coboundary to $[\hat L_5, \sigma_5]$ we could make an element $Y$ of loop order $\geq 7$ such that $(\delta + \nabla)Y=[\sigma_3,\sigma_5]$. But since $H^1(\GC_2^{7\lp})=0$ we could similarly even take $Y$ of loop order $\geq 8$. But that would make $[\sigma_3,\sigma_5]$ a cocycle, in contradiction to its non-triviality. We hence conclude that $X_6$ is a non-trivial cohomology class.

In a similar manner representatives for the other classes of degree 3 in Table \ref{tab:GC2} can be found. 
We want to go a bit further and consider generally the elements $[\hat L_{4k+1}, \sigma_{4k+1}]$ and their trivalent leading terms 
\[
X_{4k+2} = \pi ([\hat L_{4k+1}, \sigma_{4k+1}])_{4k+2}.
\]
Concretely, we have that 
\[
X_{4k+2} = (const) 
\sum_{\sigma \in S_{4k+1}}
\begin{tikzpicture}
    \node[int] (v1) at (0,.7) {};
    \node[int] (v2) at (.5,.7) {};
    \node[int] (v3) at (1,.7) {};
    \node[int] (v4) at (1.5,.7) {};
    \node[int] (v5) at (2,.7) {};
    \node[int] (w1) at (0,-.7) {};
    \node[int] (w2) at (.5,-.7) {};
    \node[int] (w3) at (1,-.7) {};
    \node[int] (w4) at (1.5,-.7) {};
    \node[int] (w5) at (2,-.7) {};
    \draw (v1) edge[bend left] (v5) (w1) edge[bend right] (w5)
    (v2) edge (v1) edge (v2) (v3) edge (v2) edge (v4) (v4) edge (v5)
    (w2) edge (w1) edge (v2) edge (w2) (w3) edge (v3) edge (w2) edge (w4) (w4) edge (v4) edge (w5) (w1) edge (v1) (w5) edge (v5);
    \draw[dashed, fill=white] (-.2,-.2) rectangle (2.2,.2);
    \node at (1,0) {$\sigma$};
\end{tikzpicture}
=
(const) \sum_{\sigma\in S_{4k+1}}B_\sigma.
\]
where $(const)$ is an irrelevant nonzero constant and we use the notation $B_\sigma$ for the barrel graphs as in \eqref{equ:Bpi def} above.

\begin{conj}
For every $k=1,2,\dots$ the element $X_{4k+2}$ defines a non-trivial cohomology class in $H^{4k-1}(\GC_2^{4k+2\lp})$. 
\end{conj}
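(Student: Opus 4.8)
The plan is to argue by contradiction, exploiting the lift of $X_{4k+2}$ into the twisted complex $(\GC_0^2,\delta+\nabla)$ that is already implicit in its construction. Since $X_{4k+2}=(\mathrm{const})\sum_{\sigma\in S_{4k+1}}B_\sigma$ consists of trivalent loop-order-$(4k+2)$ graphs, it is a top-degree element of $\GC_2^{(4k+2)\lp}$, hence automatically a $\delta$-cocycle, and the entire point is to show it is not a $\delta$-coboundary. Consider $\eta:=[\hat L_{4k+1},\sigma_{4k+1}]$. Because $\hat L_{4k+1}=L_{4k+1}+(\text{loop order}\geq 2)$ and $\sigma_{4k+1}$ is concentrated in loop order $4k+1$, the element $\eta$ is supported in loop orders $\geq 4k+2$, its leading (loop-order-$(4k+2)$) component is $[L_{4k+1},\sigma_{4k+1}]$, which is $\delta$-cohomologous to $X_{4k+2}$ since the two differ by an element of the acyclic subcomplex of graphs carrying a bivalent vertex, and, using that $\sigma_{4k+1}$ is a cocycle for $\delta+\nabla$ (in fact $\sigma_{4k+1}=(\delta+\nabla)\hat L_{8k+1}$),
\[
(\delta+\nabla)\eta=[(\delta+\nabla)\hat L_{4k+1},\sigma_{4k+1}]=[\sigma_{2k+1},\sigma_{4k+1}].
\]
As $2k+1\neq 4k+1$ for $k\geq 1$, the right-hand side is a nonzero element of the free Lie algebra on $\sigma_3,\sigma_5,\dots$, which by Brown's theorem embeds into $\grt_1=H^0(\GC_2)$; in particular $[\sigma_{2k+1},\sigma_{4k+1}]$ is a nontrivial class in $H^0(\GC_2^{(6k+2)\lp})$ and is not $\delta$-exact.

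Now suppose, for contradiction, that $X_{4k+2}=\delta w$. Together with acyclicity of the bivalent-vertex subcomplex this allows us to subtract a $(\delta+\nabla)$-coboundary from $\eta$ and obtain $Y$ supported in loop orders $\geq 4k+3$ with $(\delta+\nabla)Y=[\sigma_{2k+1},\sigma_{4k+1}]$. One then attempts to raise the bottom loop order of $Y$ one step at a time. For $4k+3\leq g\leq 6k+1$ the component $Y_g$ is a $\delta$-cocycle sitting in degree $12k+3-2g$ of $\GC_2^{g\lp}$; as $g$ runs over this range the degree runs over the odd values $4k-3,4k-5,\dots,1$. If $H^{12k+3-2g}(\GC_2^{g\lp})=0$ for each such $g$, then every $Y_g$ is a coboundary, $Y$ can be pushed to loop orders $\geq 6k+2$, and the loop-order-$(6k+2)$ component of $(\delta+\nabla)Y$ then reads $\delta Y_{6k+2}=[\sigma_{2k+1},\sigma_{4k+1}]$, contradicting the non-exactness established above. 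For $k=1$ the only group to be checked is $H^1(\GC_2^{7\lp})=0$, and one recovers precisely the argument for the class $X_6$ given earlier.

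The hard part --- and the reason the statement is only conjectural --- is that for $k\geq 2$ the intermediate groups in this band need not vanish; already $H^3(\GC_2^{12\lp})\cong\Q^2$ obstructs the boost when $k=2$. In that situation the naive induction must be replaced by a direct analysis of the spectral sequence $E^1=H(\GC_0^2,\delta)\Rightarrow\Q L_1$: if $g_1$ denotes the least loop order $>4k+2$ at which $Y_{g_1}$ fails to be a coboundary, the telescope coming from $Y$ shows that $[Y_{g_1}]\neq 0$ in $H^{12k+3-2g_1}(\GC_2^{g_1\lp})$ and that the spectral sequence differential $d_{g_1-4k-2}$ carries $[Y_{g_1}]$ onto $[\sigma_{2k+1},\sigma_{4k+1}]$. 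To derive a contradiction one must show this is incompatible with the (independently determined) way in which $[\sigma_{2k+1},\sigma_{4k+1}]$ is killed in the spectral sequence --- it must be hit by something, as it cannot map out of a vanishing range --- using the vanishing of $d_1,d_3,d_5$ and the known structure of $\grt_1$ to pin down the incidence pattern of the higher differentials among the groups $H^{12k+3-2g}(\GC_2^{g\lp})$, sitting in odd degrees, with $4k+2<g<6k+2$. For $k=2$ this can be completed --- it is Corollary~\ref{cor:X10 survives} --- but it requires the extra input, obtained by a direct finite computation, that the explicit barrel sum $X_{10}$ is nonzero in the one-dimensional space $H^7(\GC_2^{10\lp})$; an alternative, namely detecting $X_{4k+2}$ directly by a weight-system-type functional as in the odd case, is not available here since no lower bound on the even top cohomology is known. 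For general $k$ neither the relevant dimensions nor these differentials are currently under control, so treating all $k$ uniformly would require genuinely new input: either a uniform construction of functionals detecting the $X_{4k+2}$, or sharper vanishing and structure results for graph cohomology in the band $4k+2<g<6k+2$.
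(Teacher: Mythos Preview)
Your analysis is faithful to the paper's treatment: the statement is a conjecture, the paper does not prove it in general, and it verifies only $k=1$ (by the $X_6$ argument, which you reproduce correctly) and $k=2$ (by direct computer calculation). Your identification of the obstruction --- the possibly nonvanishing groups $H^{12k+3-2g}(\GC_2^{g\lp})$ for $4k+2<g<6k+2$ --- is exactly right, as is the example $H^3(\GC_2^{12\lp})\cong\Q^2$ for $k=2$, and your closing remarks on what would be needed for general $k$ match the paper's stance.

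One point of logical flow deserves clarification. You write that for $k=2$ the spectral-sequence analysis ``can be completed --- it is Corollary~\ref{cor:X10 survives}'', but in the paper the direction is reversed: the $k=2$ case of the conjecture (i.e., $0\neq[X_{10}]\in H^7(\GC_2^{10\lp})$) is established first, by a direct machine check, and Corollary~\ref{cor:X10 survives} is then \emph{deduced} from it via the element $[\hat L_9,\sigma_9]$. You effectively say this yourself in the next clause (``it requires the extra input, obtained by a direct finite computation, that \dots $X_{10}$ is nonzero''), but note that this ``extra input'' \emph{is} the $k=2$ case, so the contradiction framework you set up is not what actually settles it. Indeed, the very entry $H^3(\GC_2^{12\lp})\cong\Q^2$ you cite as obstruction is itself derived in Section~\ref{sec:cohom discussion} \emph{using} Corollary~\ref{cor:X10 survives}, so routing a proof of the $k=2$ case through that datum would be circular.
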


The conjecture holds for $k=1$ as we checked above.
By computer calculation we also checked that it holds for $k=2$.
Hence we deduce that the element $X_{10}$ generating $H^7(\GC_2^{10\lp})$ can be extended by adding terms of higher loop order to an element $\hat X_{10}$ such that $(\delta+\nabla)\hat X_{10}$ is of loop order $\geq 14$.
Rewording this statement, we have shown:
\begin{cor}\label{cor:X10 survives}
In our spectral sequence the generator of $H^7(\GC_2^{10\lp})\cong \Q X_{10}$ survives until the $E^{4}$-page, where it kills the element $[\sigma_{5}, \sigma_{9}]\in H^0(\GC_2^{14\lp})$.  
\end{cor}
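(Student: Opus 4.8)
The plan is to promote the construction of $X_{10}$ to an explicit zig-zag in the twisted complex $(\GC_0^2,\delta+\nabla)$, and then to identify the spectral sequence differential by a degree count. For $k=2$ the relevant objects are $\hat L_9$, with $(\delta+\nabla)\hat L_9=\sigma_5$, and $\sigma_9=(\delta+\nabla)\hat L_{17}$, so that $(\delta+\nabla)\sigma_9=0$; hence, exactly as in the discussion of the slot $(6,3)$ above,
\[
(\delta+\nabla)[\hat L_9,\sigma_9]=[(\delta+\nabla)\hat L_9,\sigma_9]=[\sigma_5,\sigma_9],
\]
an element of $\GC_0^2$ concentrated in loop order exactly $5+9=14$. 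The lowest loop-order component of $[\hat L_9,\sigma_9]$ itself is $[L_9,\sigma_9]$ in loop order $1+9=10$, and $X_{10}=\pi([L_9,\sigma_9])$ by definition.

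The first step is a leading-term correction. Since $\pi$ is a chain map for $\delta$ that is a quasi-isomorphism in each loop order not $\equiv 1\bmod 4$ (the complementary summand $X$ being acyclic), and since loop order $10$ carries no $L_{4k+1}$-summand, the $\delta$-closed element $[L_9,\sigma_9]$ agrees with $X_{10}$ up to a $\delta$-coboundary: $[L_9,\sigma_9]=X_{10}+\delta\beta_{10}$ for some $\beta_{10}\in\GC_0^{2,10\lp}$. Putting
\[
\hat X_{10}:=[\hat L_9,\sigma_9]-(\delta+\nabla)\beta_{10}
\]
we obtain an element of loop order $\geq 10$ whose loop-order-$10$ component is exactly $X_{10}$ and with $(\delta+\nabla)\hat X_{10}=[\sigma_5,\sigma_9]$, still concentrated in loop order $14$. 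This realizes the extension of $X_{10}$ to an $\hat X_{10}$ with $(\delta+\nabla)\hat X_{10}$ of loop order $\geq 14$ alluded to in the text.

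Next I would feed $\hat X_{10}$ into the standard zig-zag description of the loop-order spectral sequence: an element of loop order $\geq p$ whose $(\delta+\nabla)$-image has loop order $\geq p+r$ represents a class in $E^r$ on which $d_1,\dots,d_{r-1}$ all vanish, and whose $d_r$ is the $E^r$-class of that image. Applied to $\hat X_{10}$ with $p=10$ and $r=4$, this shows that the $E^1$-class $[X_{10}]$ — nonzero by the $k=2$ case of the Conjecture, which was verified above by computer — survives to the $E^4$-page, that $d_1[X_{10}]=d_2[X_{10}]=d_3[X_{10}]=0$, and that $d_4[X_{10}]$ equals the $E^4$-class of $[\sigma_5,\sigma_9]$. (No differential maps into the slot $(g,k)=(10,7)$, since a source would lie at $(10-r,6+2r)$, in cohomological degree above the vanishing bound $k\leq g-3$; so $[X_{10}]$ is not hit, and genuinely reaches $E^4$.)

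The degree count then closes the argument. The only a priori nonzero differentials out of $(10,7)$ are $d_2$ (to $(12,4)$) and $d_4$ (to $(14,0)$): $d_1$ and $d_3$ vanish by \cite[Corollary 4.4]{WillwacherTriconnected}, and for $r\geq 5$ the target $(10+r,8-2r)$ has negative cohomological degree. We have just seen $d_2[X_{10}]=0$. Since the spectral sequence converges to the one-dimensional space $\Q L_1$, concentrated in loop order $1$, the nonzero loop-order-$10$ class $[X_{10}]$ must die; as it is never hit, it must support a differential, and the only remaining candidate is $d_4$. Therefore $d_4[X_{10}]\neq 0$ on $E^4$, which forces $[\sigma_5,\sigma_9]\in H^0(\GC_2^{14\lp})$ to be still nonzero on $E^4$ and $d_4[X_{10}]$ to be a nonzero multiple of it — precisely the assertion of the Corollary. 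The one genuinely non-formal input is the $k=2$ case of the Conjecture; the main delicacy is the bookkeeping: keeping the page indexing straight so that an obstruction jumping from loop order $10$ to loop order $14$ lands on $E^4$ rather than $E^3$ or $E^5$, and checking that the correction term $\beta_{10}$ alters neither the loop-order-$10$ component nor the $(\delta+\nabla)$-image.
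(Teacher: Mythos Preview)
Your argument is correct and follows the same route as the paper: build $\hat X_{10}$ from $[\hat L_9,\sigma_9]$ (after the leading-term correction), observe that $(\delta+\nabla)\hat X_{10}=[\sigma_5,\sigma_9]$ lives in loop order $14$, and read off the spectral sequence statement. Your write-up is in fact more complete than the paper's one-line ``rewording'': you make the $\beta_{10}$-correction explicit and, importantly, supply the convergence/degree argument showing $d_4[X_{10}]\neq 0$ (hence genuinely ``kills''), which the paper leaves implicit and which cannot be read off Table~\ref{tab:GC2} without circularity since the $(12,3)$ entry there is itself deduced from this Corollary.
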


\section{On a conjecture of Francis Brown}
It is known by work of Francis Brown \cite{Brown} the Grothendieck-Teichmüller Lie algebra $\grt_1\cong H^0(\GC_2)$ contains a free Lie algebra in generators $\sigma_{2k+1}$.\footnote{To be precise, Brown's generators are not known to agree with our $\sigma_{2k+1}$ in general, though this is true for $g\leq 29$.} Brown furthermore conjectured \cite[Conjecture 2.5]{BrownGC} that this free Lie algebra can be extended to a larger free Lie algebra involving generators of higher degrees as well. The construction of the generators is a bit too involved to recall here, and partially conjectural. However, we note that $\sigma_3$ and $X_{10}$ are part of the conjectured generators. Thus we can provide a counterexample to the conjecture as follows:
\begin{prop}
    $[\sigma_3,X_{10}]=0$ as an element of $H^7(\GC_2^{13\lp})$.
\end{prop}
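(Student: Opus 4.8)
The plan is to exploit the spectral sequence machinery of Section~\ref{sec:reps} rather than to attempt a direct computation in $\GC_2^{13\lp}$, whose dimension (several tens of millions) makes a brute-force verification of the identity $[\sigma_3,X_{10}]=0$ impractical. The key observation is that, by Corollary~\ref{cor:X10 survives}, the class $X_{10}$ admits a lift $\hat X_{10}=X_{10}+(\cdots)$ with terms of loop order $\geq 11$ such that $(\delta+\nabla)\hat X_{10}$ is concentrated in loop order $\geq 14$; concretely $(\delta+\nabla)\hat X_{10}=[\sigma_5,\sigma_9]+(\cdots)$ with the correction terms of loop order $>14$. Likewise $\sigma_3=(\delta+\nabla)\hat L_5$ with $\hat L_5=L_5+(\cdots)$.

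First I would form the element $[\hat L_5,\hat X_{10}]\in\GC_0^2$, whose lowest loop-order piece is $[\hat L_5, X_{10}]_{13}$; passing through $\pi$ this is a candidate representative for $[\sigma_3,X_{10}]$ in $H^7(\GC_2^{13\lp})$. Applying the Leibniz rule,
\[
(\delta+\nabla)[\hat L_5,\hat X_{10}] = [\sigma_3,\hat X_{10}] \pm [\hat L_5,[\sigma_5,\sigma_9]+(\cdots)],
\]
so modulo loop order $\geq 14$ the right-hand side vanishes. Hence the loop-order-$13$ part $\pi([\hat L_5,\hat X_{10}])_{13}$ is a $\delta$-cocycle representing $[\sigma_3,X_{10}]$. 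Now I would run the same ``degree-jump'' argument used for $X_6$ and for Corollary~\ref{cor:X10 survives}: if this class were nontrivial in $H^7(\GC_2^{13\lp})$, it would have to be killed in the spectral sequence, and the only admissible target position is $(15,0)$, where it would necessarily cancel $[\sigma_5,[\sigma_3,\sigma_7]]$ or $[\sigma_3,[\sigma_5,\sigma_7]]$ — but by the analysis of the entry $(13,3)$ already carried out in Section~\ref{sec:cohom discussion}, those three classes in $H^0(\GC_2^{15\lp})$ other than $\sigma_{15}$ are accounted for: one is killed from $(13,3)$ (by the $(11,6)$-class) and the remaining two from $(13,3)$ directly, via classes \emph{lying in the ideal generated by $\sigma_3$}. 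The point is to check that the bookkeeping of which $H^0(\GC_2^{15\lp})$-classes get cancelled from where leaves \emph{no room} for a class at $(13,7)$ to cancel anything, because $H^1(\GC_2^{15\lp})$ lies in the vanishing range and the $(15,0)$ cancellations are already exhausted — forcing $[\sigma_3,X_{10}]$ to be zero.

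Alternatively, and perhaps more cleanly, I would argue via the Lie-algebra structure on the $E^\infty$-page: the classes surviving the spectral sequence assemble into (the abelianization-controlled data of) $\bigoplus_k\Q\sigma_{2k+1}$, and the Jacobi/Leibniz compatibility forces $[\sigma_3,X_{10}]$ to be detected by $(\delta+\nabla)[\hat L_5,\hat X_{10}]$ in loop order $14$; since $(\delta+\nabla)[\hat L_5,\hat X_{10}]=\pm[\hat L_5,[\sigma_5,\sigma_9]]$ up to higher loop order, and $[\hat L_5,[\sigma_5,\sigma_9]]$ has trivalent leading term of loop order $15$ (not $14$), the loop-order-$14$ component vanishes, so $[\sigma_3,X_{10}]$ supports no further differential and — not being one of the surviving $\sigma_{2k+1}$ — must already be zero in $H^7(\GC_2^{13\lp})$.

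The main obstacle, and the step requiring genuine care rather than formal manipulation, is the second one: controlling the correction terms $(\cdots)$ in $\hat X_{10}$ and verifying that $[\hat L_5,\hat X_{10}]$ really does map under $\delta+\nabla$ into loop order $\geq 14$ with the claimed leading term, i.e. that no loop-order-$13$ contribution to $(\delta+\nabla)[\hat L_5,\hat X_{10}]$ survives. This amounts to knowing that the lift $\hat X_{10}$ from Corollary~\ref{cor:X10 survives} can be chosen compatibly with the bracket with $\hat L_5$, which in turn rests on the vanishing statements $H^1(\GC_2^{11\lp})=H^2(\GC_2^{11\lp})=H^1(\GC_2^{12\lp})=H^2(\GC_2^{12\lp})=H^1(\GC_2^{13\lp})=H^2(\GC_2^{13\lp})=0$ established in Section~\ref{sec:cohom discussion} — these are exactly what allow successive correction terms in the lift to be pushed up in loop order without obstruction. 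Once that compatibility is in hand, the degree-counting/bookkeeping argument closes the proof.
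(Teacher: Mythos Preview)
There is a genuine gap, rooted in a loop-order miscount. You write that $[\hat L_5,\hat X_{10}]$ has lowest loop-order piece in loop order $13$, but recall $\hat L_5=L_5+(\cdots)$ with $L_5$ the $5$-cycle of loop order \emph{one}; hence $[\hat L_5,\hat X_{10}]$ begins in loop order $1+10=11$. Your Leibniz computation is correct, but its conclusion is not that the right-hand side vanishes modulo loop order $\geq 14$: the term $[\sigma_3,\hat X_{10}]$ has leading piece $[\sigma_3,X_{10}]$ in loop order $13$, so what you have actually shown is
\[
(\delta+\nabla)[\hat L_5,\hat X_{10}] \;=\; [\sigma_3,X_{10}] + (\text{loop order }\geq 14).
\]
This says $[\sigma_3,X_{10}]$ is \emph{hit} by something whose leading term sits at $(11,10)$ in $\GC_2$-degrees --- not that it supports a differential. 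Your subsequent bookkeeping at $(15,0)$, which concerns what a putative class at $(13,7)$ could \emph{kill}, is therefore aimed in the wrong direction. The same miscount invalidates your alternative paragraph: you drop the term $[\sigma_3,\hat X_{10}]$ from $(\delta+\nabla)[\hat L_5,\hat X_{10}]$, but that is precisely the leading term. Finally, the vanishing statements you invoke in the last paragraph ($H^1,H^2$ in loop orders $11$--$13$) are at the wrong degrees; the relevant positions are $(11,10)$ and $(12,8)$.

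The paper's argument bypasses $\hat X_{10}$ and works with $X_{10}'=([\hat L_9,\sigma_9])_{10}$ directly, exploiting that $(\delta+\nabla)[\hat L_9,\sigma_9]=[\sigma_5,\sigma_9]$ \emph{exactly}, with no higher-loop tail. One then sets
\[
\hat Z \;=\; [\sigma_3,[\hat L_9,\sigma_9]] - [\hat L_5,[\sigma_5,\sigma_9]],
\]
whose leading term is $Z=[\sigma_3,X_{10}']$ in loop order $13$ (the subtracted piece begins only at loop order $15$), and checks by Leibniz that $(\delta+\nabla)\hat Z=[\sigma_3,[\sigma_5,\sigma_9]]-[\sigma_3,[\sigma_5,\sigma_9]]=0$. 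Thus $\hat Z$ is a genuine $(\delta+\nabla)$-cocycle; since the twisted cohomology is $\Q L_1$, its leading class $[Z]$ must be a permanent cycle that is eventually hit. But the positions from which $(13,7)$ can receive a differential lie already in the vanishing range $k>g-3$, so $[Z]=0$. Your element $[\hat L_5,\hat X_{10}]$ can in fact be massaged into a valid proof along similar lines (kill the $(11,10)$-piece by vacuity, then use $d_1=0$ at $(12,8)$), but that is not the argument you wrote.
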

\begin{proof}
    $X_{10}$ is cohomologous to $X_{10}':=([\hat L_{9}, \sigma_9])_{10}$, so we may equivalently show that $Z:=[\sigma_3,X_{10}']=0$.
    Next, extend the $\delta$-cocycle $Z$ to an element 
    \[
    \hat Z = [\sigma_3,[\hat L_9, \sigma_9 ] ] 
    - \underbrace{ [\hat L_5, [\sigma_5, \sigma_9]] }_{\geq 15\text{-loops}},
    \]
    only adding terms of higher loop order in the process.
    The element $\hat Z$ then satisfies 
    \[
    (\delta+\nabla ) \hat Z = [\sigma_3,[(\delta+\nabla )\hat L_9, \sigma_9 ] ]
    -[(\delta+\nabla ) \hat L_5, [\sigma_5, \sigma_9]]
    =
    [\sigma_3,[\sigma_5, \sigma_9 ] ]-[\sigma_3,[\sigma_5, \sigma_9 ] ]=0.
    \]
    Hence $\hat Z$ must be a $(\delta+\nabla)$-cocycle, i.e., there is an element $Y\in\GC_0^2$ such that 
    \[
    \hat Z=(\delta+\nabla ) Y.
    \]
    However, if $Z$ were a non-trivial cohomology class, this would imply that it is killed by some other class in our spectral sequence.
    However, this is not possible by degree reasons, see Table \ref{tab:GC2}: The first possible position $(11,9)$ from where it could be killed is already in the vanishing range.
\end{proof}
Despite the negative result above, a version of Brown's conjecture might still be true, in that there is still a nonzero class in $H(\GC_2)$ which the leading-loop-order term of an element $(\delta+\nabla)$-cohomologous to $\hat Z$. In fact, the proof above suggest that this class should live in $H^3(\GC_2^{15\lp})$.  
However, this ``descent'' of classes makes the structure of the Lie algebra $H(\GC_2)$ more difficult to understand.


\begin{thebibliography}{80}

\bibitem{AlekseevTorossian} A. Alekseev and C. Torossian, 
The Kashiwara-Vergne conjecture and Drinfeld's associators, 
Ann. of Math. (2) {\bf 175}, 2 (2012) 415--463;
arXiv:0802.4300.

\bibitem{BNMK}
Dror Bar~Natan and Brendan McKay.
\newblock {Graph Cohomology - An Overview and Some Computations}.
\newblock Available at
 \url{http://www.math.toronto.edu/~drorbn/papers/GCOC/GCOC.ps}


\bibitem{datamine}
J. Bl\"umlein, D.J. Broadhurst, J.A.M. Vermaseren.
\newblock The Multiple Zeta Value Data Mine,
\newblock {\em Comput. Phys. Commun.} 181, 582--625, (2010).

\bibitem{BorinskyZagier}
Michael Borinsky and Don Zagier.
\newblock On the Euler characteristic of the commutative graph complex and the top weight cohomology of $\mathcal M_g$.
\newblock Preprint arXiv:2405.04190, 2024.

\bibitem{BroadhurstV}
D. J. Broadhurst.
\newblock Conjectured enumeration of Vassiliev invariants
\newblock Preprint arXiv:q-alg/9709031, 1997.


 \bibitem{Brown}
 Francis Brown.
 \newblock Mixed {T}ate motives over {$\mathbb Z$}.
 \newblock {\em Ann. of Math. (2)}, 175(2):949--976, 2012.

 \bibitem{BrownGC}
 Francis Brown.
\newblock Invariant differential forms on complexes of graphs and Feynman integrals.
\newblock {\em SIGMA Symmetry Integrability Geom. Methods Appl.} 17 (2021), Paper No. 103, 54 pp.


\bibitem{BrunWillwacher}
Simon Brun and Thomas Willwacher.
\newblock Graph homology computations.
\newblock \emph{New York J. Math.} 30 (2024), 58--92.


\bibitem{CGP1}
Melody Chan, S{\o}ren Galatius and Sam Payne.
\newblock Tropical curves, graph complexes, and top weight cohomology of $\mathcal M_g$,
\newblock J. Amer. Math. Soc. 34 (2021), no.~2, 565--594.

\bibitem{Drinfeld} V.G. Drinfeld, On quasitriangular quasi-Hopf algebras and
on a group that is closely connected with ${\rm Gal}(\overline{\mathbb Q}/\mathbb Q)$.
(Russian) Algebra i Analiz {\bf 2}, 4 (1990) 149--181;
translation in Leningrad Math. J. {\bf 2}, 4 (1991) 829--860.

\bibitem{DumasEtal}
Jean-Guillaume Dumas, Philippe Elbaz-Vincent, Pascal Giorgi, Anna Urbánska.
\newblock Parallel computation of the rank of large sparse matrices from algebraic K-theory.
\newblock PASCO '07: Proceedings of the 2007 international workshop on Parallel symbolic computation, pages 43 -- 52,
\url{https://doi.org/10.1145/1278177.1278186}, 2007.

\bibitem{FTW}
Benoit Fresse, Victor Turchin and Thomas Willwacher.
\newblock The rational homotopy of mapping spaces of $E_n$ operads.
\newblock Preprint, arXiv:1703.06123, 2017.




\bibitem{KneisslerNumber}
Jan A. Kneissler.
\newblock The number of primitive Vassiliev invariants up to degree 12.
\newblock Preprint arXiv:q-alg/9706022, 1997.

\bibitem{KWZ}
Anton Khoroshkin, Thomas Willwacher and Marko \v Zivkovi\'c.
\newblock Differentials on graph complexes.
\newblock {\em Adv. Math.}, 307:1184--1214, 2017.

\bibitem{Linbox}
The LinBox Group, LinBox -- Exact Linear Algebra over the Integers and Finite Rings, Version 1.1.6; 2008. (\url{http://linalg.org}) 


\bibitem{NaefWillwacherKV}
F. Naef and T. Willwacher.
\newblock Numerical computation of linearized KV and the Deligne-Drinfeld and Broadhurst-Kreimer conjectures.
\newblock in preparation, 2025.

\bibitem{NaefWillwacherOKV}
F. Naef and T. Willwacher.
\newblock An odd version of the Grothendieck-Teichmüller and Kashiwara-Vergne Lie algebras.
\newblock in preparation, 2025.




\bibitem{Vogel}
Pierre Vogel.
\newblock Algebraic structures on modules of diagrams.
\newblock \emph{J. Pure Appl. Algebra}, 215(6): 1292--1339, 2011. 

\bibitem{Willwachergrt}
Thomas Willwacher.
\newblock {M. Kontsevich's graph complex and the Grothendieck-Teichm\"uller Lie
  algebra}.
\newblock {\em Invent. Math.}, 2014.
\newblock arXiv:1009.1654.

\bibitem{WillwacherTriconnected}
Thomas Willwacher.
\newblock The triconnected Kontsevich graph complex.
\newblock Preprint arXiv:2503.17131, 2025.


\bibitem{WillwacherZivkovic}
Thomas Willwacher and Marko \v{Z}ivkovi\'c.
\newblock Multiple edges in M. Kontsevich's graph complexes and computations of the dimensions and Euler characteristics.
\newblock {\em Adv. Math.} 272 (2015), 553--578.


\end{thebibliography}
\end{document}